\documentclass[12pt]{article}
\usepackage{amssymb}
\usepackage{amsfonts}
\usepackage{amsmath}
\usepackage{amsthm}
\usepackage{eucal}
\usepackage{latexsym}
\usepackage{array}
\usepackage{pstricks,pst-node}
\theoremstyle{plain}
\newtheorem{thm}{Theorem}[section]
\newtheorem{prop}{Proposition}[section]
\newtheorem{cor}{Corollary}[section]
\theoremstyle{remark}
\newtheorem{defn}{Definition}[section]

\newtheorem{demo}{Example}
\def\al{\alpha}
\def\be{\beta}
\def\de{\delta}

\def\T{\mathcal T}
\def\la{\lambda}
\def\si{\sigma}

\def\Si{\Sigma}
\def\tilde{\widetilde}
\def\<{\langle}
\def\>{\rangle}
\def\up{\uparrow}
\def\dn{\downarrow}
\def\Zm{\mathbf Z/m\mathbf Z}
\def\Zn{\mathbf Z/n\mathbf Z}
\def\Hom{\operatorname{Hom}}
\def\Aut{\operatorname{Aut}}
\def\Ob{\operatorname{Ob}}

\def\src{\operatorname{src}}
\def\trg{\operatorname{trg}}
\def\im{\operatorname{im}}

\def\A{\mathcal A}
\def\B{\mathcal B}
\def\C{\mathcal C}
\def\D{\mathcal D}
\def\G{\mathcal G}
\def\K{\mathcal K}
\def\M{\mathcal M}
\def\SS{\mathcal S}
\def\T{\mathcal T}

\def\Y{\mathcal Y}
\def\N{\mathbb N}

\def\kk{\Bbbk}

\def\Ne{\mathcal N}

\def\H{\mathfrak H}
\def\P{\mathcal P}

\def\U{\mathfrak U}
\def\W{\mathfrak W}
\def\Z{\mathbf Z}
\def\UU{\mathfrak U\mathfrak U}
\def\SU{\mathfrak S\mathfrak U}
\def\<{\langle}
\def\>{\rangle}

\setlength{\textwidth}{6.3in}
\setlength{\textheight}{8.7in}
\setlength{\topmargin}{20pt}
\setlength{\headsep}{0pt}
\setlength{\headheight}{0pt}
\setlength{\oddsidemargin}{0pt}
\setlength{\evensidemargin}{0pt}
\setlength{\extrarowheight}{4pt}
\pagestyle{plain}
\begin{document}
\title{Updown Categories:  Generating Functions and Universal
Covers}
\author{Michael E. Hoffman\\
\small Dept. of Mathematics, U. S. Naval Academy\\[-0.8ex]
\small Annapolis, MD 21402 USA\\[-0.8ex]
\small \texttt{meh@usna.edu}}
\date{\small July 6, 2012\\
\small Keywords: category, poset, differential poset, universal cover,
partition, rooted tree\\
\small MR Classifications:  Primary 18B35, 06A07; Secondary 57M10, 05A17, 05C05}
\maketitle
\begin{abstract}
A poset can be regarded as a category in which there is at most 
one morphism between objects, and such that at most one of 
$\Hom(c,c')$ and $\Hom(c',c)$ is nonempty for $c\ne c'$.  If we keep 
in place the latter axiom but allow for more than one morphism
between objects, we have a sort of generalized poset in which
there are multiplicities attached to the covering relations, and
possibly nontrivial automorphism groups.  We call such a category
an ``updown category.''  In this paper we give a precise definition
of such categories and develop a theory for them.
We also give a detailed account of ten examples, including 
updown categories of integer partitions, integer compositions, 
planar rooted trees, and rooted trees.
\end{abstract}
\section{Introduction}
\label{S:intro}
\par
Suppose we have a collection of combinatorial objects, naturally
graded, so that any object of rank $n$ can be built up in $n$ 
steps from a single object in rank 0.  Further, for any 
object $p$ of rank $n$ and $q$ of rank $n+1$, there are some
number $u(p;q)$ of ways to build up $p$ to make $q$, and $d(p;q)$
ways to break down $q$ to get $p$.  Here $u(p;q)$ and $d(p;q)$
are nonnegative integers, possibly unequal, though we require
that $u(p;q)\ne 0$ if and only if $d(p;q)\ne 0$.  For example
(as in \cite{H2}) our collection might be the set of rooted trees,
with $u(p;q)$ the number of vertices of the rooted tree $p$ to
which a new edge and terminal vertex can be added to get $q$,
and $d(p;q)$ the number of terminal vertices (and incoming edges)
that can be removed from $q$ to get $p$.
\par
We can obtain a natural definition of such a situation by modifying
the categorical definition of a poset.
A poset is usually thought of as a category with at most one
morphism between objects, and at most one of the sets $\Hom(p,q)$
and $\Hom(p,q)$ nonempty when $p\ne cq$.  
If we keep in place the second condition but permit $\Hom(p,q)$
to have more than one element, we allow for multiplicities (if $p\ne q$) 
and automorphisms (if $p=q$).  
If in addition the object set is graded, we call such a category 
(precisely defined in \S\ref{S:ucdef} below) an ``updown category.''  
For an updown category $\C$, there are nonnegative
integers $u(p;q)$ and $d(p;q)$ for $p,q\in\Ob\C$ with $q$ having 
rank one greater than $p$, such that
$$
u(p;q)|\Aut q|=d(p;q)|\Aut p| .
$$
Then the set $\Ob\C$ has a natural graded poset structure, and
the operators $U$ and $D$ on the free vector space 
$\kk(\Ob\C)$ defined by equations 
$$
Up = \sum_{\text{q covers p}} u(p;q)q\quad\text{and}\quad
Dp = \sum_{\text{p covers q}} d(q;p)q .
$$
are adjoint for the inner product on $\kk(\Ob\C)$ given by
$\<p,q\>=|\Aut p|\de_{p,q}$.
\par
For any updown category $\C$, there are associated two generating
functions, defined in $\S\ref{S:ecgf}$:  the object generating function and
the morphism generating function.  If $\C$ is a univalent updown
category (i.e., $u(p;q)=d(p;q)$ for all $p,q\in\Ob\C$), then the former
is the rank-generating function of the graded poset $\Ob\C$.
Computation of these generating functions is facilitated if $\C$
is evenly up-covered (i.e., $\sum_{\text{$q$ covers $p$}}u(p,q)$
depends only on the grade $|p|$ of $p$) or evenly down-covered
($\sum_{\text{$p$ covers $q$}}d(q;p)$ only depends on $|p|$).
\par
Univalent updown categories admit a natural definition of 
universal covers.
In \cite{H1} the author developed a theory of universal covers 
for weighted-relation posets, i.e., ranked posets in which each
covering relation has a single number $n(x,y)$ assigned to it.
The universal cover of a weighted-relation poset $P$ is the
``unfolding'' of $P$ into a usually much larger weighted-relation
poset $\tilde P$, so that the Hasse diagram of $\tilde P$ is 
a tree and all covering relations of $\tilde P$ have multiplicity 1.
Although $\tilde P$ had a natural description in each of the seven examples 
considered in \cite{H1}, the general construction of $\tilde P$ given 
in \cite[Theorem 3.3]{H1} 
was somewhat unsatisfactory since it involved many arbitrary choices.  
In \S\ref{S:uuccs} we show that univalent updown categories are essentially 
``categorified'' weighted-relation posets and give a functorial 
definition of universal covers for them (Theorem \ref{T:ucover} below).
We also give a functorial description of two univalent updown categories
$\C^{\up}$ and $\C^{\dn}$ associated with an updown category $\C$.
\par
In \S\ref{S:ex} we offer ten examples, which encompass all those given in 
\cite{H1}.  
These include updown categories whose objects are the subsets of a 
finite set, monomials, necklaces, integer partitions, integer compositions, 
planar rooted trees, and rooted trees.  For each example we compute
the object and morphism generating functions and describe the associated
covering spaces.
\par
The author takes pleasure in acknowledging the support he has received 
from various institutions during the extended gestation of this paper.
The basic idea was conceived during the academic year 2002-2003, 
when the author was partially supported by the Naval Academy Research 
Council (NARC).
During the following academic year, while on sabbatical leave, 
he wrote a first draft \cite{H3} during a stay at the Max-Planck-Institut f\"ur 
Mathematik (MPIM) in Bonn, and subsequently presented its contents to 
the research group of Prof. Lothar Gerritzen at Ruhr-Universit\"at Bochum
during a visit supported by the German Academic Exchange Service
(DAAD).
The author thanks Prof. Gerritzen and Dr. Ralf Holtkamp for helpful 
discussions.
During the academic years 2004-2005 and 2005-2006, the author 
received partial support from the NARC.
Finally, he again enjoyed the hospitality of the MPIM during 2012.
\section{Updown categories}
\label{S:ucdef}
\par
We begin by defining an updown category.
\begin{defn}
\label{D:updown}
An updown category is a small category $\C$ with a rank functor 
$|\cdot|:\C\to\N$ (where $\N$ is the ordered set of natural numbers 
regarded as a category) such that
\begin{itemize}
\item[A1.]
Each rank $\C_n=\{p\in\Ob\C:|p|=n\}$ is finite.
\item[A2.]
The zeroth rank $\C_0$ consists of a single object $\hat 0$,
and $\Hom(\hat 0,p)$ is nonempty for all objects $p$ of $\C$.
\item[A3.]
For objects $p,p'$ of $\C$, $\Hom(p,p')$ is always finite, and
$\Hom(p,p')=\emptyset$ unless $|p|<|p'|$ or $p=p'$.  In the
latter case, $\Hom(p,p)$ is a group, denoted $\Aut(p)$.
\item[A4.]
Any morphism $p\to p'$, where $|p'|=|p|+k$, factors as a
composition $p=p_0\to p_1\to\dots\to p_k=p'$, where $|p_{i+1}|=
|p_i|+1$;
\item[A5.]
If $|p'|=|p|+1$, the actions of $\Aut(p)$ and $\Aut(p')$ on $\Hom(p,p')$
(by precomposition and postcomposition respectively) are free.
\end{itemize}
\end{defn}
\par
Given an updown category, we can define the multiplicities mentioned
in the introduction as follows.
\begin{defn}
\label{D:mult}
For any two objects $p,p'$ of an updown category $\C$ with $|p'|=|p|+1$, 
define
$$
u(p;p')=\left|\Hom(p,p')/\Aut(p')\right|=
\frac{|\Hom(p,p')|}{|\Aut(p')|}
$$
and
$$
d(p;p')=\left|\Hom(p,p')/\Aut(p)\right|=
\frac{|\Hom(p,p')|}{|\Aut(p)|}.
$$
\end{defn}
It follows immediately from these definitions that
\begin{equation}
u(p;p')|\Aut(p')|=d(p;p')|\Aut(p)| .
\label{E:udaut}
\end{equation}
\par
We note two extreme cases.  First, suppose $\C_n$ is empty for
all $n>0$.  Then $\C$ is essentially the finite group $\Aut\hat 0$.
Second, suppose that every set $\Hom(p,p')$ has at most one element.
Then $\C$ is a graded poset with least element $\hat 0$.
\par
Two important special types of updown categories are defined
as follows.
\begin{defn}
\label{D:unilat}
An updown category $\C$ is univalent if $\Aut(p)$ is trivial for 
all $p\in\Ob\C$.  
An updown category $\C$ is simple if $\Hom(c,c')$ has at most one 
element for all $c,c'\in\Ob\C$, and the factorization in A4 is unique, 
i.e., for $|c'|>|c|$ any $f\in\Hom(c,c')$ has a unique factorization
into morphisms between adjacent ranks.
\end{defn}
Of course simple implies univalent, but not conversely.
A univalent updown category is the ``categorification'' of a 
weighted-relation poset in the sense of \cite{H1}; see \S4 below
for details.
\par
If $\C$ and $\D$ are updown categories, their product $\C\times\D$ is the 
usual one, i.e. $\Ob(\C\times\D)=\Ob\C\times\Ob\D$ and
$$
\Hom_{\C\times\D}((c,d),(c',d'))=\Hom_{\C}(c,c')\times\Hom_{\D}(d,d') .  
$$
The rank is defined on $\C\times\D$ by $|(c,d)|=|c|+|d|$.  
We have the following result.
\begin{prop}
\label{P:prod}
If $\C$ and $\D$ are updown categories, then so is their product
$\C\times\D$.
\end{prop}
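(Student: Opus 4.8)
The plan is to verify axioms A1--A5 of Definition \ref{D:updown} for $\C\times\D$ one at a time, using the fact that $\C$ and $\D$ already satisfy them. First I would check that $|\cdot|:\C\times\D\to\N$, defined by $|(c,d)|=|c|+|d|$, is genuinely a rank functor: a morphism $(f,g):(c,d)\to(c',d')$ exists only when $f$ and $g$ exist, so $|c|\le|c'|$ and $|d|\le|d'|$, giving $|(c,d)|\le|(c',d')|$, and composition adds ranks in each coordinate, so it adds ranks of pairs. For A1, note $(\C\times\D)_n=\bigcup_{i+j=n}\C_i\times\D_j$, a finite union of products of finite sets by A1 for $\C$ and $\D$. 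For A2, the zeroth rank is $\C_0\times\D_0=\{(\hat 0_\C,\hat 0_\D)\}$, a single object, and $\Hom((\hat 0,\hat 0),(c,d))=\Hom(\hat 0_\C,c)\times\Hom(\hat 0_\D,d)$ is nonempty since each factor is by A2.

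Next I would handle A3. The Hom-set of the product is the product of Hom-sets, hence finite; and $\Hom((c,d),(c',d'))$ is nonempty only if both $\Hom_\C(c,c')$ and $\Hom_\D(d,d')$ are, which by A3 forces ($|c|<|c'|$ or $c=c'$) and ($|d|<|d'|$ or $d=d'$). If neither coordinate is an equality we get $|(c,d)|<|(c',d')|$; if both are equalities then $(c,d)=(c',d')$ and $\Hom((c,d),(c,d))=\Aut(c)\times\Aut(d)$ is a group; the remaining case, exactly one equality, still gives strict inequality in the sum. So $\Aut(c,d)=\Aut(c)\times\Aut(d)$. For A5, when $|(c',d')|=|(c,d)|+1$ exactly one coordinate rank jumps by one while the other stays equal, say $|c'|=|c|+1$ and $d=d'$; then the $\Aut(c,d)$-action on $\Hom((c,d),(c',d'))=\Hom_\C(c,c')\times\Aut_\D(d)$ is the product of the $\Aut(c)$-action on $\Hom_\C(c,c')$ (free by A5 for $\C$) with the left regular action of $\Aut(d)$ on itself (free), hence free; similarly for postcomposition, using that $\Aut(d')$ acts trivially-freely in the degenerate factor. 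A product of free actions is free, so A5 holds.

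The step I expect to require the most care is A4, the factorization axiom. Given a morphism $(f,g):(c,d)\to(c',d')$ with $|(c',d')|=|(c,d)|+k$, I would set $a=|c'|-|c|$ and $b=|d'|-|d|$, so $a+b=k$, and use A4 for $\C$ to factor $f=f_a\circ\cdots\circ f_1$ through objects of consecutive ranks, and likewise $g=g_b\circ\cdots\circ g_1$. The issue is that these two chains have different lengths, so I cannot simply pair them coordinatewise. The fix is to interleave: build a chain of length $k$ in $\C\times\D$ by first applying the $f_i$'s one at a time (keeping the $\D$-coordinate fixed at $d$ via its identity morphism, which raises the total rank by one at each step since $|c|$ increases), reaching $(c',d)$, then applying the $g_j$'s one at a time with the $\C$-coordinate fixed at $c'$. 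Each intermediate morphism is a pair $(f_i,\id_d)$ or $(\id_{c'},g_j)$, lies between adjacent ranks, and the composite of the whole chain is $(g\circ\id_{c'})\circ\cdots=(f,g)$ by functoriality of the two projections and bifunctoriality of the product. Any ordering of the interleaving works; I would just pick the canonical ``all $f$'s then all $g$'s'' order. With A4 dispatched this way, all five axioms are verified and the proof is complete.
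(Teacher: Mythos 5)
Your proof is correct and follows essentially the same route as the paper: the paper also identifies $(\C\times\D)_n$ with $\coprod_{i+j=n}\C_i\times\D_j$ for A1, verifies A4 by the same interleaving $(c,d)\to(c_1,d)\to\dots\to(c',d)\to(c',d_1)\to\dots\to(c',d')$, and checks A5 via the identification $\Hom((c,d),(c',d))\cong\Hom(c,c')\times\Aut(d)$ with the product of a free action and the regular action. You simply spell out A1--A3 and the rank functor in more detail than the paper, which dismisses them as routine.
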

\begin{proof}
Since 
$$
(\C\times\D)_n=\coprod_{i+j=n}\C_i\times\D_j ,
$$
axiom A1 is clear; and evidently $\hat 0=(\hat 0_{\C},\hat 0_{\D})$ 
satisfies A2.  Checking A3 is routine, and
for A4 we can combine factorizations
$$
c=c_0\to c_1\to\dots\to c_k=c'\quad\text{and}\quad
d=d_0\to d_1\to\dots\to d_l=d'
$$
into
$$
(c,d)\to (c_1,d)\to\dots\to (c',d)\to (c',d_1)\to\dots\to (c',d') .
$$
Finally, for A5 note that, e.g.,
$$
\Hom((c,d),(c',d))\cong \Hom(c,c')\times\Aut(d) ,
$$
and the action of $\Aut(c,d)\cong\Aut(c)\times\Aut(d)$ on this set
is free if and only if the action of $\Aut(c)$ on $\Hom(c,c')$ is free.
\end{proof}
\par
We note that the product of two univalent updown categories is univalent,
but the product of simple updown categories need not be simple:  see 
Example \ref{Ex:subset} in \S6 below.
\par
We now define a morphism of updown categories.
\begin{defn}
\label{D:morph}
Let $\C,\D$ be updown categories.  A morphism from $\C$ to $\D$ is
a functor $F:\C\to\D$ with $|F(p)|=|p|$ for all $p\in\Ob\C$, and
such that, 
for any $p,q\in\Ob\C$ with $|q|=|p|+1$, the induced maps
$$
\Aut(p)\to \Aut(F(p)),
$$
$$
\coprod_{\{q': F(q')=F(q)\}} \Hom(p,q')/\Aut(p)\to\Hom(F(p),F(q))/\Aut(F(p)),
$$
and
$$
\coprod_{\{q': F(q')=F(q)\}} \Hom(p,q')/\Aut(q')\to\Hom(F(p),F(q))/\Aut(F(q))
$$
are injective.
\end{defn}
\par
We have the following result.
\begin{prop} 
\label{P:simple}
Suppose $F:\C\to\D$ is a morphism of updown categories.
If $\D$ is univalent, then so is $\C$; if $\D$ is simple, then
$\C$ is also simple and $F$ is injective as a function on object
sets.
\end{prop}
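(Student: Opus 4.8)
The plan is to establish the three conclusions in turn, each bootstrapping the next.

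\emph{Univalence.} For every $p\in\Ob\C$ the first injectivity requirement of Definition~\ref{D:morph} exhibits $\Aut(p)$ as a subgroup of $\Aut(F(p))$, which is trivial when $\D$ is univalent; hence $\C$ is univalent. Since a simple updown category is in particular univalent, this already settles the first half of the ``simple'' case, and it also lets me simplify the remaining two injectivity conditions of Definition~\ref{D:morph}: with all of $\Aut(p)$, $\Aut(q')$, $\Aut(F(p))$, $\Aut(F(q))$ trivial, for $|q|=|p|+1$ both conditions reduce to the injectivity of
$$
\coprod_{\{q'\,:\,F(q')=F(q)\}}\Hom(p,q')\longrightarrow\Hom(F(p),F(q)).
$$
When $\D$ is simple the target has at most one element, so $\sum_{\{q':F(q')=F(q)\}}|\Hom(p,q')|\le 1$. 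I record the two consequences I will use: (a) $|\Hom_\C(p,q)|\le 1$ whenever $|q|=|p|+1$; and (b) if $q\ne q'$ but $F(q)=F(q')$, then at most one of $\Hom_\C(p,q)$, $\Hom_\C(p,q')$ is nonempty.

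\emph{A structural fact about simple targets.} Next I would show that in any simple updown category every object $d$ with $|d|\ge 1$ covers a unique object, i.e.\ there is exactly one $e\in\D_{|d|-1}$ with $\Hom(e,d)\ne\emptyset$. By A2 there is a morphism $\hat 0\to d$; factoring it through adjacent ranks (A4) produces one such $e$, say $e_0$, as its penultimate object. If $e$ is any object with $|e|=|d|-1$ and $\Hom(e,d)\ne\emptyset$, choose a morphism $\hat 0\to e$ (A2) and a morphism $e\to d$ and compose; the result lies in the one-element set $\Hom(\hat 0,d)$, so it equals $\hat 0\to d$, and an adjacent-rank factorization of this composite ends $\cdots\to e\to d$. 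By uniqueness of the adjacent-rank factorization in a simple category, the penultimate object of $\hat 0\to d$ is therefore $e$; hence $e=e_0$, and $d$ covers $e_0$ alone. (Equivalently, the Hasse diagram of the object set of a simple updown category is a rooted tree; I only need the weaker statement.)

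\emph{Injectivity of $F$ on objects.} Suppose not, and choose objects $q\ne q'$ with $F(q)=F(q')$ and with $n:=|q|=|q'|$ as small as possible; by A2 we have $n\ge 1$. Using A2 and A4, choose $c\in\C_{n-1}$ with $\Hom_\C(c,q)\ne\emptyset$ and $c'\in\C_{n-1}$ with $\Hom_\C(c',q')\ne\emptyset$. Then $F(c)$ and $F(c')$ are objects of rank $n-1$ admitting morphisms to $F(q)=F(q')$, hence both equal the unique object covered by $F(q)$; so $F(c)=F(c')$. Minimality of $n$ forbids two distinct objects of rank $n-1$ from having equal image, so $c=c'$. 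But then $\Hom_\C(c,q)$ and $\Hom_\C(c,q')$ are both nonempty while $q\ne q'$ and $F(q)=F(q')$, contradicting (b). Hence $F$ is injective on objects.

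\emph{Simplicity of $\C$, and the main obstacle.} First I would check $|\Hom_\C(c,c')|\le 1$ for all $c,c'$ by induction on $k=|c'|-|c|$: for $k\le 0$ this is immediate from A3 and univalence, for $k=1$ it is (a), and for $k\ge 2$, given $f,g\in\Hom_\C(c,c')$ I factor both through adjacent ranks (A4) and apply $F$; since $|\Hom_\D(F(c),F(c'))|\le 1$ we get $F(f)=F(g)$, so uniqueness of the adjacent-rank factorization in $\D$ makes the two chains of $F$-images coincide, and since $F$ is injective on objects the two chains of intermediate objects of $f$ and $g$ in $\C$ coincide as well; the inductive hypothesis, applied one factor at a time, then gives $f=g$. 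The identical bookkeeping — factor, push forward by $F$, invoke uniqueness in $\D$ and injectivity of $F$ on objects, then peel off one factor — shows that the A4 factorization in $\C$ is itself unique, so $\C$ is simple. I expect the object-injectivity step to be the real obstacle: conditions~(2)--(3) of Definition~\ref{D:morph} encode only the \emph{local} fact that $F$ cannot merge two distinct objects sharing a common parent, and the work is in globalizing this — the unique-cover lemma for simple categories, combined with descent to the minimal offending rank to produce such a common parent in $\C$, is exactly what makes the globalization go through; everything afterward is routine propagation along factorizations.
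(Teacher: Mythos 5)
Your proof is correct and follows the same route as the paper: univalence comes from the injectivity of $\Aut(p)\to\Aut(F(p))$, and simplicity from the observation that the coproduct map of Definition~\ref{D:morph} lands in an at-most-one-element set, forcing both $|\Hom_\C(p,q)|\le 1$ and the non-merging of objects with a common parent. The paper simply asserts that this local condition makes $F$ injective on object sets; your unique-cover lemma for simple updown categories together with the descent to a minimal offending rank supplies the globalization that the paper leaves implicit, so your write-up is, if anything, more complete than the original.
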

\begin{proof}
It follows immediately from Definition \ref{D:morph} that
$\C$ must be univalent when $\D$ is.
Now suppose $\D$ is simple.  
Then $\C$ is univalent, and it follows from Definition \ref{D:morph} 
that the induced function
$$
\coprod_{\{q':F(q')=F(q)\}}\Hom(p,q')\to \Hom(F(p),F(q))
$$
is injective when $|q|=|p|+1$:  but $\Hom(F(p),F(q))$ is (at most)
a one-element set, so $F$ must be injective on object sets and $\Hom(p,q)$ 
can have at most one object.  But then unique factorization of morphisms
in $\C$ follows from that in $\D$, so $\C$ is simple.
\end{proof}
\par
There is a morphism of updown categories
$\C\to\C\times\D$ given by sending $c\in\Ob\C$ to $(c,\hat 0_{\D})$
whenever $\C$ and $\D$ are updown categories; similarly there is
a morphism $\D\to\C\times\D$.  We denote the $n$-fold cartesian
power of $\C$ by $\C^n$. 
\par
Let $\kk$ be a field of characteristic 0, $\kk(\Ob\C)$ the free vector space 
on $\Ob\C$ over $\kk$.  
We now define ``up'' and ``down'' operators on $\kk(\Ob\C)$.
\begin{defn} 
\label{D:udops}
For an updown category $\C$, let $U,D:\kk(\Ob\C)\to\kk(\Ob\C)$
be the the linear operators given by
$$
Up=\sum_{|p'|=|p|+1} u(p;p')p'
$$
and
$$
Dp=\begin{cases} \sum_{|p'|=|p|-1} d(p';p) p',& |p|>0,\\
0, & p=\hat 0,\end{cases}
$$
for all $p\in\Ob\C$.
\end{defn}
\begin{thm} 
\label{T:adjoint}
The operators $U$ and $D$ are adjoint with respect
to the inner product on $\kk(\Ob\C)$ defined by
\begin{equation*}
\<p,p'\>=\begin{cases} |\Aut (p)|,&\text{if $p'=p$},\\
0,&\text{otherwise.}\end{cases}
\end{equation*}
\end{thm}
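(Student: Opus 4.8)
The plan is to verify the adjointness relation $\langle Up, q\rangle = \langle p, Dq\rangle$ on basis elements $p,q \in \Ob\C$ and then extend by bilinearity. Since both $U$ and $D$ are defined on the basis $\Ob\C$ and the inner product is diagonal in this basis, checking the identity for all pairs $(p,q)$ of objects suffices.

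First I would dispose of the degenerate cases by a degree count. The vector $Up$ is supported in rank $|p|+1$, so $\langle Up, q\rangle = 0$ unless $|q| = |p|+1$. Likewise $Dq$ is supported in rank $|q|-1$, so $\langle p, Dq\rangle = 0$ unless $|p| = |q|-1$, i.e. unless $|q| = |p|+1$. Hence both sides vanish whenever $q$ does not have rank one greater than $p$, and the identity holds trivially in that case.

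It remains to treat the case $|q| = |p|+1$. Here I would simply read off the two sides from Definition \ref{D:udops}: expanding $Up = \sum_{|p''| = |p|+1} u(p;p'')\,p''$ and using $\langle p'', q\rangle = |\Aut(q)|\,\de_{p'',q}$ gives $\langle Up, q\rangle = u(p;q)\,|\Aut(q)|$. Similarly, expanding $Dq = \sum_{|p''| = |q|-1} d(p'';q)\,p''$ gives $\langle p, Dq\rangle = d(p;q)\,|\Aut(p)|$. The two are equal precisely by equation \eqref{E:udaut}, which was deduced directly from Definition \ref{D:mult}.

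There is no real obstacle here; the content of the theorem is entirely contained in the normalization $\langle p,p\rangle = |\Aut(p)|$ together with the identity $u(p;p')|\Aut(p')| = d(p;p')|\Aut(p)|$. The only thing to be careful about is bookkeeping of the automorphism-group factors — making sure the $|\Aut(q)|$ that enters $\langle Up, q\rangle$ pairs correctly against the $|\Aut(p)|$ that enters $\langle p, Dq\rangle$ — and this is exactly what \eqref{E:udaut} resolves. I would close by remarking that the convention $D\hat 0 = 0$ is forced and consistent, since there is no object of rank $-1$ and no term $\langle Up', \hat 0\rangle$ with $|p'| = -1$ to match.
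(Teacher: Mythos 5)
Your proposal is correct and follows essentially the same route as the paper: reduce to basis elements, observe that both sides vanish unless $|q|=|p|+1$, compute $\<Up,q\>=u(p;q)|\Aut(q)|$ and $\<p,Dq\>=d(p;q)|\Aut(p)|$, and conclude by equation (\ref{E:udaut}). No gaps.
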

\begin{proof} Since $\<Up,p'\>=\<p,Dp'\>=0$ unless $|p'|=|p|+1$, it
suffices to consider that case.  Then
$$
\<Up,p'\>=u(p;p')\<p',p'\>=u(p;p')|\Aut(p')|
$$
while 
$$
\<p,Dp'\>=d(p;p')\<p,p\>=d(p;p')|\Aut(p)|,
$$
and the two agree by equation (\ref{E:udaut}).
\end{proof}
\par
Now we extend the definitions of $u(p;p')$ and $d(p;p')$ to
any pair $p,p'\in\Ob\C$ by setting $u(p;p')=d(p;p')=0$ if 
$\Hom(p,p')=\emptyset$ and
\begin{equation}
\label{E:extend}
u(p;p')=\frac{\<U^{|p'|-|p|}(p),p'\>}{|\Aut(p')|},
\quad
d(p;p')=\frac{\<U^{|p'|-|p|}(p),p'\>}{|\Aut(p)|}
\end{equation}
otherwise.  It is immediate that equation (\ref{E:udaut}) still holds,
and that
$$
U^k(p)=\sum_{|p'|=|p|+k} u(p;p') p'
$$
and 
$$
D^k(p)=\sum_{|p'|=|p|-k} d(p;p')p'
$$
for any $p\in\Ob\C$.  (However, it is no longer true that
$u(p;q)$ and $d(p;q)$ have any simple relation to $|\Hom(p,q)|$
when $|q|-|p|>1$.)  An important special case of the extended
equation (\ref{E:udaut}) is
\begin{equation}
\frac{d(\hat 0;p)}{u(\hat 0;p)}=
\frac{|\Aut(p)|}{|\Aut\hat 0|} 
\label{E:ratio}
\end{equation}
for any object $p$ of $\C$.  If $\Aut\hat 0$ is trivial,
equation (\ref{E:ratio}) gives the order
of the automorphism group of $p\in\Ob\C$ as a ratio of multiplicities
(cf. Proposition 2.6 of \cite{H2}).
We also have the following result.
\begin{thm}
\label{T:trans}
If $|p|\le k\le |q|$, then
$$
u(p;q)=\sum_{|p'|=k} u(p;p')u(p';q),
$$
and similarly for $u$ replaced by $d$.
\end{thm}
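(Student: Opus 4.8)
The plan is to read both identities off from the semigroup law $U^{m+n}=U^{m}\circ U^{n}$ (and likewise $D^{m+n}=D^{m}\circ D^{n}$), which holds trivially because $U$ and $D$ are linear operators on $\kk(\Ob\C)$. The only other ingredient is the pair of expansions recorded just before the statement, namely $U^{k}(p)=\sum_{|p'|=|p|+k}u(p;p')p'$ and $D^{k}(p)=\sum_{|p'|=|p|-k}d(p';p)p'$, which say precisely that in the basis $\Ob\C$ of $\kk(\Ob\C)$ the iterated operators have the extended multiplicities as coefficients. So the whole argument is: apply the semigroup law, expand, and compare coefficients.

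Concretely, set $a=|p|$ and $b=|q|$, so that $a\le k\le b$. Beginning with $U^{k-a}(p)=\sum_{|p'|=k}u(p;p')p'$, apply $U^{b-k}$ and then expand $U^{b-k}(p')=\sum_{|q'|=b}u(p';q')q'$ for each $p'$ of rank $k$; this gives
$$
U^{b-a}(p)=U^{b-k}\bigl(U^{k-a}(p)\bigr)=\sum_{|q'|=b}\biggl(\sum_{|p'|=k}u(p;p')u(p';q')\biggr)q'.
$$
On the other hand $U^{b-a}(p)=\sum_{|q'|=b}u(p;q')q'$ directly, so comparing the coefficient of the basis vector $q$ yields $u(p;q)=\sum_{|p'|=k}u(p;p')u(p';q)$, which is the first assertion. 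The statement for $d$ follows in exactly the same way from $D^{b-a}(q)=D^{k-a}\bigl(D^{b-k}(q)\bigr)$ and the expansion of $D^{k}$, now comparing the coefficient of $p$; alternatively, one can deduce it from the $u$-identity by multiplying through by the appropriate orders of automorphism groups, using the extended form $d(x;y)|\Aut(x)|=u(x;y)|\Aut(y)|$ of (\ref{E:udaut}).

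I do not expect any real obstacle here; the only points requiring a little care are purely bookkeeping. First, each power of $U$ raises rank by exactly its exponent, so every cross term $u(p;p')u(p';q')$ automatically lands in rank $b$ and no spurious terms intrude. Second, the degenerate cases $k=a$ and $k=b$ should be checked, but they cause no trouble since $u(x;y)=\delta_{x,y}$ whenever $|x|=|y|$, which is immediate from (\ref{E:extend}) because $\langle U^{0}(x),y\rangle/|\Aut(y)|=\langle x,y\rangle/|\Aut(y)|=\delta_{x,y}$; in those cases the stated sum collapses to $u(p;q)$ itself.
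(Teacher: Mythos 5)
Your argument is correct and is essentially the paper's own proof: the paper also splits $U^{|q|-|p|}$ as a composition of two powers of $U$, expands via the extended multiplicities, and extracts the coefficient of $q$ (using the inner product $\<U^{|q|-|p|}p,q\>/|\Aut(q)|$ rather than literal coefficient comparison, which is the same thing). Your remarks on the degenerate cases $k=|p|$, $k=|q|$ and on deducing the $d$-identity from the $u$-identity via (\ref{E:udaut}) are fine but not needed beyond what the paper does.
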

\begin{proof} Using equation (\ref{E:extend}), we can write $u(p,q)$ as
\begin{multline*}
\frac{\<U^{|q|-|p|}p,q\>}{|\Aut(q)|}
=\frac1{|\Aut(q)|}\<U^{k-|p|}U^{|q|-k}(p),q\>
=\frac1{|\Aut(q)|}\sum_{|p'|=k}u(p;p')\<U^{k-|p|}p',q\>\\
=\frac1{|\Aut(q)|}\sum_{|p'|=k}u(p;p')u(p';q)|\Aut(q)|
=\sum_{|p'|=k}u(p;p')u(p';q) ,
\end{multline*}
and the proof for $d$ is similar.
\end{proof}
\begin{defn}
\label{D:po}
For an updown category $\C$, define the induced partial order on $\Ob\C$ by 
setting $p\preceq q$ if and only if $\Hom(p,q)\ne\emptyset$.  
\end{defn}
It follows from Theorem \ref{T:trans} that $p\preceq q\iff u(p;q)\ne 0 
\iff d(p;q)\ne 0$.
Henceforth we write $p \lhd q$ if $q$ covers $p$ in the induced partial 
order.  Of course different updown categories can have the same induced
poset:  see Examples \ref{Ex:upart} and \ref{Ex:kpart} of \S5 below.  
If the updown category $\C$
was a poset to start with (thought of as a category in the usual way),
then all the weights $u(p;q)=d(p;q)$ assigned to the covering relations
in $(\Ob\C,\preceq)$ are 1.  We call such an updown category unital.
Evidently simple $\implies$ unital $\implies$ univalent.
\par
In the univalent case, equation (\ref{E:ratio}) is trivial since
$u(p;q)=d(p;q)$ for all $p$ and $q$.  Nevertheless, we have the
following interpretation of the multiplicity in this case.
\begin{thm}
\label{T:count}
Let $\C$ be a univalent updown category.  For $p,q\in\Ob\C$ with
$|q|-|p|=n>0$, $u(p;q)=d(p;q)$ is the number of distinct
strings $(h_1,\dots,h_n)$ such that each $h_i$ is 
a morphism between adjacent ranks and $h_nh_{n-1}\cdots h_1$
is a morphism from $p$ to $q$.
\end{thm}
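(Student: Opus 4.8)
The plan is to compute $u(p;q)$ via the extended formula (\ref{E:extend}), namely $u(p;q) = \langle U^n(p), q\rangle / |\Aut(q)|$, and to recognize that in the univalent case the inner product reduces to the ordinary coefficient since $|\Aut(q)| = 1$. So $u(p;q) = \langle U^n(p), q\rangle$, the coefficient of $q$ in $U^n(p)$. The task then becomes showing that this coefficient counts strings $(h_1,\dots,h_n)$ of morphisms between adjacent ranks whose composite runs from $p$ to $q$.

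First I would set up notation: for $r \lhd r'$ (adjacent ranks), write $m(r;r') = u(r;r') = d(r;r')$, which by Definition \ref{D:mult} equals $|\Hom(r,r')|$ since all automorphism groups are trivial in the univalent case. Thus $m(r;r')$ literally counts the morphisms $r \to r'$. Next I would expand $U^n(p)$ by iterating Definition \ref{D:udops}: the coefficient of $q$ in $U^n(p)$ is $\sum m(p;p_1)m(p_1;p_2)\cdots m(p_{n-1};q)$, the sum over all chains $p = p_0 \lhd p_1 \lhd \dots \lhd p_n = q$ in the induced poset. Since each factor $m(p_{i-1};p_i)$ is the number of morphisms $p_{i-1}\to p_i$ between those adjacent ranks, a term $\prod_i m(p_{i-1};p_i)$ equals the number of ways to choose a tuple $(h_1,\dots,h_n)$ with $h_i \in \Hom(p_{i-1},p_i)$; summing over chains counts all such tuples with $\src(h_1) = p$ and $\trg(h_n) = q$.

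The one point requiring a little care — and the likely main obstacle — is matching this count to the statement's phrasing, which counts strings $(h_1,\dots,h_n)$ of morphisms between adjacent ranks such that \emph{the composite $h_n h_{n-1}\cdots h_1$ is a morphism from $p$ to $q$}, without an a priori decomposition through intermediate objects. To close this gap I would argue that composability of $h_n\cdots h_1$ forces $\trg(h_i) = \src(h_{i+1})$ for each $i$, so setting $p_i := \trg(h_i)$ (with $p_0 := \src(h_1)$) produces exactly the chain structure above; and the hypothesis that the composite goes from $p$ to $q$ says precisely $p_0 = p$, $p_n = q$. Conversely every chain-indexed tuple gives a valid composable string. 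Hence the two descriptions enumerate the same set, and its cardinality is the coefficient computed above, namely $u(p;q)$. I would also note that this coefficient is visibly nonnegative and that it agrees with $d(p;q)$ by (\ref{E:udaut}) with trivial automorphism groups, consistent with the symmetric roles of $U$ and $D$ (one could equally run the argument with $D^n(q)$, obtaining the same count read right-to-left).
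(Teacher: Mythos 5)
Your proposal is correct and follows essentially the same route as the paper: the paper proves the result by induction on $n$, using the recursion $N(p,q)=\sum_{r\lhd q}N(p,r)N(r,q)$ together with Theorem \ref{T:trans}, which is exactly the chain-decomposition identity you obtain by expanding $U^n(p)$ directly; the base case is the same observation that $u(p;q)=|\Hom(p,q)|$ for adjacent ranks in the univalent case. Your explicit treatment of the composability/chain correspondence is a nice touch but does not change the substance of the argument.
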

\begin{proof} We use induction on $n$.  The result is immediate
if $n=1$, since in a univalent updown category
$$
u(p;q)=d(p;q)=|\Hom(p,q)|
$$
when $|q|=|p|+1$.  Now if $N(p,q)$ denotes the number of
strings $(h_1,\dots,h_n)$ as in the statement of the proposition,
it is evident that, for $|q|>|p|+1$, 
$$
N(p,q)=\sum_{r\lhd q} N(p,r)N(r,q) .
$$
But then the inductive step follows from Theorem \ref{T:trans}.
\end{proof}
\par
\section{Even Covering and Generating Functions}
\label{S:ecgf}
In this section we introduce the even covering properties, which
are satisfied in many of the examples of updown categories given in
\S6 below.  We also define several generating functions associated 
with any updown category.
\begin{defn}
\label{D:udcover}
Let $\C$ be an updown category.  Then $\C$ is evenly up-covered if
there is a sequence of numbers $u_0,u_1,\dots$ so that, for any $p\in\C_n$,
$$
\sum_{q \rhd p} u(p;q)=u_n .
$$
Dually, $\C$ is evenly down-covered if there is a sequence of numbers 
$d_1,d_2,\dots$ so that, for any $p\in\C_n$,
$$
\sum_{q \lhd p} d(q;p)=d_n .
$$
\end{defn}
We note that any simple updown category is evenly down-covered with 
$d_n=1$ for all $n$.  Another special case occurs often enough that
we make the following definition.
\begin{defn}
We call $\C$ factorial if it is evenly down-covered with
$$
\sum_{q \lhd p} d(q;p)=|p| 
$$
for all $p\in\Ob\C$.
\end{defn}
\par
If $\C$ is evenly up-covered, then by induction using Theorem \ref{T:trans}
it follows that
\begin{equation}
\label{E:ufact}
\sum_{|c|=n} u(\hat0;c)=u_0u_1\cdots u_{n-1} 
\end{equation}
for $c\in\C_n$.
On the other hand, if $\C$ is evenly down-covered then one has
$D^nc=d_nd_{n-1}\cdots d_1\hat0$ for $c\in\C_n$, and so
\begin{equation}
\label{E:dfact}
d(\hat0;c)=\frac{\<\hat0,D^nc\>}{|\Aut\hat0|}=d_nd_{n-1}\cdots d_1
\end{equation}
for such $c$.  In particular, $d(\hat0;c)=|c|!$ for all $c\in\Ob\C$
if $\C$ is factorial.
\par
Although the even covering properties are not generally preserved under
products, we do have the following result.
\begin{thm}
\label{T:fprod}
If $\C$ and $\D$ are factorial updown categories, then
so is $\C\times\D$.
\end{thm}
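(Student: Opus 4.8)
The plan is to work directly from the explicit description of the product category in Section~\ref{S:ucdef} together with Definition~\ref{D:mult}. First I would invoke Proposition~\ref{P:prod}, which already guarantees that $\C\times\D$ is an updown category; so it makes sense to ask whether it is factorial, and it remains only to check that $\sum_{(c',d')\lhd(c,d)} d((c',d');(c,d)) = |(c,d)|$ for every object $(c,d)$.

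The first substantive step is to pin down the covering relations of $\C\times\D$. For objects with $|(c',d')|=|(c,d)|-1$ we have $\Hom_{\C\times\D}((c',d'),(c,d))=\Hom_\C(c',c)\times\Hom_\D(d',d)$, so this is nonempty exactly when $c'\preceq c$ and $d'\preceq d$. By axiom A3 this forces $|c'|\le|c|$ and $|d'|\le|d|$, and since $|c'|+|d'|=|c|+|d|-1$ we land in one of two disjoint cases: either $c'=c$ and $d'\lhd d$, or $c'\lhd c$ and $d'=d$.

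The second step is to compute the down-multiplicity in each case. If $c'=c$ and $d'\lhd d$, then $\Hom_{\C\times\D}((c,d'),(c,d))\cong\Aut(c)\times\Hom_\D(d',d)$ while $\Aut((c,d'))\cong\Aut(c)\times\Aut(d')$, so Definition~\ref{D:mult} gives
$$
d((c,d');(c,d))=\frac{|\Aut(c)|\,|\Hom_\D(d',d)|}{|\Aut(c)|\,|\Aut(d')|}=\frac{|\Hom_\D(d',d)|}{|\Aut(d')|}=d(d';d).
$$
Symmetrically, if $c'\lhd c$ and $d'=d$ then $d((c',d);(c,d))=d(c';c)$. Summing over all covers of $(c,d)$ and using that $\C$ and $\D$ are factorial,
$$
\sum_{(c',d')\lhd(c,d)} d((c',d');(c,d))=\sum_{d'\lhd d} d(d';d)+\sum_{c'\lhd c} d(c';c)=|d|+|c|=|(c,d)|,
$$
which is exactly the factorial condition for $\C\times\D$.

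I do not expect a genuine obstacle here: the argument is essentially the bookkeeping already used in the proof of Proposition~\ref{P:prod}. The only points needing a little care are verifying that the two families of covers are disjoint and exhaustive (which is where the rank equation and axiom A3 are used) and keeping the cancellation of the automorphism-group orders straight in the multiplicity computation.
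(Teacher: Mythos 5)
Your proof is correct and follows essentially the same route as the paper's: identify the covers of $(c,d)$ as those of the form $(c',d)$ with $c'\lhd c$ or $(c,d')$ with $d'\lhd d$, observe that the down-multiplicities restrict to $d(c';c)$ and $d(d';d)$ respectively, and sum. You simply spell out in more detail the two steps the paper leaves implicit (the classification of covers via the rank equation and A3, and the cancellation of $|\Aut(c)|$ in the multiplicity computation).
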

\begin{proof} Any object covered by $(c,d)\in\Ob(\C\times\D)$ must have
the form $(c',d)$ with $c'\lhd c$, or $(c,d')$, with $d'\lhd d$.
Thus
\begin{multline*}
\sum_{p\lhd (c,d)} d(p;(c,d))
=\sum_{c'\lhd c}d((c',d);(c,d))+\sum_{d'\lhd d}d((c,d');(c,d))\\
=\sum_{c'\lhd c}d(c';c)+\sum_{d'\lhd d}d(d';d)
=|c|+|d|=|(c,d)|.
\end{multline*}
\end{proof}
Neither of the two even covering properties implies the other.  
Examples \ref{Ex:plrtree} and \ref{Ex:brtree} of \S5 below are 
evenly up-covered but not
evenly down-covered, and it is easy to construct simple updown
categories that are not evenly up-covered.  For simple updown
categories that are evenly up-covered, we have the following result.
\begin{thm}
\label{T:unique}
Suppose $\C$ and $\D$ are simple updown categories that
are both evenly up-covered with the same sequence $\{u_n\}$.  Then 
$\C$ and $\D$ are isomorphic as updown categories.
\end{thm}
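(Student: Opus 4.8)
The plan is to build an isomorphism $F:\C\to\D$ by induction on rank, exploiting that in a simple updown category the entire structure is determined rank-by-rank once we know the covering multiplicities, and that the even up-covering condition forces those multiplicities to be rigid. First I would observe that since each $\C_n$ is finite and $\C$ is simple (hence univalent, with all $\Hom$-sets having at most one element and unique factorization through adjacent ranks), an object $c\in\C_n$ is completely recorded by the set of all strings $(h_1,\dots,h_n)$ of adjacent-rank morphisms composing to a morphism $\hat0\to c$; by Theorem \ref{T:count} there are exactly $u(\hat0;c)$ such strings, and by equation (\ref{E:ufact}) the total number of such strings over all $c\in\C_n$ is $u_0u_1\cdots u_{n-1}$, which by hypothesis is the \emph{same} number for $\C$ and $\D$.

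Next I would set up the induction. At rank $0$ we must send $\hat0_\C\mapsto\hat0_\D$, and this is forced. Assume $F$ has been defined as an updown-category isomorphism on ranks $\le n$. For each $c\in\C_n$ we know $u_n=\sum_{c'\rhd c}u(c;c')=\sum_{c'\rhd c}|\Hom(c,c')|$, i.e. $c$ has exactly $u_n$ outgoing adjacent-rank morphisms (counted with the at-most-one-per-target constraint, so $u_n$ is literally the number of objects covering $c$, each covered with multiplicity $1$ since in a simple category $|\Hom(c,c')|\le1$). The same holds for $F(c)$ in $\D$. So I would like to define the covering relation at rank $n+1$ by matching these outgoing morphisms. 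The subtlety is that a single object $c'\in\C_{n+1}$ may be covered by several objects of $\C_n$, and consistency of $F$ requires that whenever $c_1,c_2\rhd$ some common lower object we already identified, the induced patterns agree. The clean way to handle this: realize $\C_{n+1}$ as a quotient of the set $S_{n+1}=\{(c,m):c\in\C_n,\ m\in\Hom(c,c')\text{ for some }c'\rhd c\}$ by the equivalence relation $(c_1,m_1)\sim(c_2,m_2)$ iff $m_1,m_2$ have the same target; unique factorization in A4 guarantees this equivalence is detected by data already living in rank $\le n$ together with the composites $\hat0\to c_i\xrightarrow{m_i}(\text{target})$, and $F$ being an updown-morphism isomorphism on ranks $\le n$ transports this relation to the corresponding relation for $\D$. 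Thus $F$ extends to a bijection $\C_{n+1}\to\D_{n+1}$ and to an isomorphism on the relevant $\Hom$-sets.

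The main obstacle — and where I would spend the most care — is showing the extension is \emph{well-defined and bijective}, not merely injective: a priori $\C_{n+1}$ could have an object covered by objects of $\C_n$ in a way $\D$ does not replicate, or the counts $\sum_{c\in\C_n}u_n=|\C_n|\,u_n$ could be distributed differently among targets in $\C$ versus $\D$. Here is the key point to nail down: because $\C$ is simple, the map $S_{n+1}\to\C_{n+1}$ sending $(c,m)\mapsto\operatorname{trg}(m)$ is \emph{surjective} (every object of rank $n+1$ covers something, by A4) and its fibers over $c'$ have size $\sum_{c\lhd c'}|\Hom(c,c')|=\sum_{c\lhd c'}u(c;c')$, which is finite but not controlled by the up-covering hypothesis. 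To get bijectivity I would instead argue that $F$ restricted to rank $n$ already induces a bijection on the full set of length-$(n{+}1)$ strings (adding one more adjacent morphism), since the outgoing morphisms at each rank-$n$ object correspond under $F$; then two strings in $\C$ have the same target iff their unique factorizations agree iff (transporting via $F$) the corresponding strings in $\D$ have the same target — this equivalence-preserving bijection on strings descends to a bijection $\C_{n+1}\to\D_{n+1}$. Finally I would check the five axioms are respected (A1–A3 are immediate from the bijection; A4 and the uniqueness clause of simplicity transfer because factorizations of a composite correspond under $F$), and check that $F$ and $F^{-1}$ each satisfy Definition \ref{D:morph}, which in the simple case reduces to the statement that $F$ is a bijection on objects compatible with the one-element $\Hom$-sets — already established. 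This completes the induction and gives the isomorphism.
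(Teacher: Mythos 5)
Your proof is correct and is in essence the paper's own argument: induct on rank, and at each stage pair off the $u_n$ covers of $c\in\C_n$ with the $u_n$ covers of $F(c)$ by an arbitrary bijection. The detour through strings in your last paragraph can be compressed to the single observation that makes the extension well-defined: in a simple updown category every object $q$ of rank $n+1$ covers a \emph{unique} object of rank $n$ (two distinct parents would yield two distinct adjacent-rank factorizations of the unique morphism $\hat 0\to q$), so the Hasse diagram is a tree, each $q$ has exactly one incoming adjacent-rank morphism, and the scenario of an object with several parents that you spend the middle of the argument guarding against never arises.
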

\begin{proof}
It suffices to give a functor $F:\C\to\D$ that is bijective on the
object sets such that $F(c)\lhd F(c')$ for all $c,c'\in\Ob\C$.  We 
proceed by induction on rank.  For rank 0 we set $F(\hat 0_{\C})=
\hat 0_{\D}$.  Suppose $F$ has been defined through rank $n$.
For each $p\in\C_n$, choose a bijection $\phi_p$ from $C^+(p)=
\{p'\in\C_{n+1}|p\lhd p'\}$ to $C^+(F(p))$ (which is possible since
both sets have $u_n$ elements).  Then for $q\in\C_{n+1}$, set
$F(q)=\phi_p(q)$, where $p$ is the unique element of $\C_n$ that
$q$ covers.
\end{proof}
Now we turn to generating functions.
\begin{defn} 
\label{D:gf}
Let $\C$ be an updown category.  The object
generating function of $\C$ is
$$
O_{\C}(t)=\sum_{p\in\Ob\C}\frac{t^{|p|}}{|\Aut(p)|}=
\sum_{n\ge 0}\sum_{p\in\C_n} \frac{t^n}{|\Aut(p)|} ,
$$
and the morphism generating function of $\C$ is
\begin{equation}
\label{E:mgf1}
M_{\C}(t)=\sum_{p,q\in\Ob\C,\ p\lhd q}\frac{u(p;q)t^{|p|+|q|}}
{|\Aut(p)|}=
\sum_{n\ge 0}\sum_{p\in\C_n}\sum_{q\in\C_{n+1}} \frac{u(p;q) t^{2n+1}}
{|\Aut(p)|} .
\end{equation}
\end{defn}
Both $O_{\C}(t)$ and $M_{\C}(t)$ are elements of the formal power
series ring $\mathbb Q[[t]]$.  If $\C$ is univalent, then
$$
O_{\C}(t)=\sum_{n\ge 0}|\C_n| t^n
$$
and
$$
M_{\C}(t)=\sum_{n\ge 0}\sum_{p\in\C_n}\sum_{q\in\C_{n+1}}|\Hom(p,q)|t^{2n+1}
$$
are elements of $\mathbb Z[[t]]$.
In view of equation (\ref{E:udaut}), the morphism generating function
can be written
\begin{equation}
\label{E:mgf2}
M_{\C}(t)=\sum_{p,q\in\Ob\C,\ p \lhd q}\frac{d(p;q)t^{|p|+|q|}}
{|\Aut(q)|} .
\end{equation}
\begin{defn}
\label{D:fs}
For an updown category $\C$, the formal series of $\C$ is
$$
S_{\C}(t)=\sum_{p\in\Ob\C}\frac{p t^{|p|}}{|\Aut(p)|}\in \kk(\Ob\C)[[t]] .
$$
\end{defn}
These definitions are related by the following result.
\begin{thm} 
\label{T:gfrelation}
If the inner product $\<,\>$ of Theorem \ref{T:adjoint} is extended to
$\kk(\Ob\C)[[t]]$, then
\begin{equation}
\<S_{\C}(t),S_{\C}(t)\>=O_{\C}(t^2)
\label{E:SS}
\end{equation}
and 
\begin{equation}
\<US_{\C}(t),S_{\C}(t)\>=\<S_{\C}(t),DS_{\C}(t)\>=M_{\C}(t).
\label{E:USS}
\end{equation}
\end{thm}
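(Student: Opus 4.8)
The plan is to verify both identities by expanding the formal series $S_{\C}(t)$ in its definition and using the inner product termwise. For the first identity, I would write
$$
\<S_{\C}(t),S_{\C}(t)\>=\sum_{p,q\in\Ob\C}\frac{t^{|p|+|q|}}{|\Aut(p)|\,|\Aut(q)|}\<p,q\>.
$$
By the definition of the inner product in Theorem \ref{T:adjoint}, $\<p,q\>=0$ unless $p=q$, in which case it equals $|\Aut(p)|$. Hence only the diagonal terms survive, contributing $\sum_{p\in\Ob\C}\frac{t^{2|p|}}{|\Aut(p)|}$, which is exactly $O_{\C}(t^2)$ by Definition \ref{D:gf}. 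This step is entirely routine.

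For the second identity, I would first compute $US_{\C}(t)$ by applying $U$ term-by-term: using Definition \ref{D:udops},
$$
US_{\C}(t)=\sum_{p\in\Ob\C}\frac{t^{|p|}}{|\Aut(p)|}\sum_{|q|=|p|+1}u(p;q)\,q.
$$
Pairing against $S_{\C}(t)=\sum_{q'}\frac{q't^{|q'|}}{|\Aut(q')|}$ and invoking orthogonality again kills every cross term except $q=q'$, leaving
$$
\<US_{\C}(t),S_{\C}(t)\>=\sum_{p\in\Ob\C}\sum_{|q|=|p|+1}\frac{u(p;q)\,t^{|p|+|q|}}{|\Aut(p)|\,|\Aut(q)|}\<q,q\>
=\sum_{p\lhd q}\frac{u(p;q)\,t^{|p|+|q|}}{|\Aut(p)|},
$$
since $\<q,q\>=|\Aut(q)|$ cancels one factor. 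This is precisely the expression for $M_{\C}(t)$ in equation (\ref{E:mgf1}). The middle equality $\<US_{\C}(t),S_{\C}(t)\>=\<S_{\C}(t),DS_{\C}(t)\>$ is then immediate from Theorem \ref{T:adjoint} (adjointness of $U$ and $D$), once one checks the inner product extends to $\kk(\Ob\C)[[t]]$ in the obvious $\kk[[t]]$-bilinear way so that adjointness is preserved coefficientwise; alternatively one can recompute $\<S_{\C}(t),DS_{\C}(t)\>$ directly and arrive at the form (\ref{E:mgf2}) of $M_{\C}(t)$, then note (\ref{E:mgf1}) and (\ref{E:mgf2}) agree by equation (\ref{E:udaut}).

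I do not anticipate a genuine obstacle here; the only point requiring a word of care is the extension of $\<,\>$ to power series — one must say that $\<,\>$ is extended $\kk[[t]]$-bilinearly (equivalently, defined coefficientwise on each power of $t$), so that all the manipulations above, including the interchange of summation and the use of adjointness, are legitimate in $\kk[[t]]$. Once that convention is fixed, everything reduces to the orthogonality relation $\<p,q\>=|\Aut(p)|\de_{p,q}$ and the definitions, so the proof is a short formal computation.
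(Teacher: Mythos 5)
Your proposal is correct and is exactly the computation the paper intends: its proof is just the line ``Immediate from Theorem \ref{T:adjoint} and the definitions,'' and your termwise expansion using the orthogonality relation $\<p,q\>=|\Aut(p)|\de_{p,q}$ together with adjointness of $U$ and $D$ is the same argument written out in full. No further comment is needed.
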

\begin{proof} Immediate from Theorem \ref{T:adjoint} and the definitions.
\end{proof}
\par
The generating functions of a product can be obtained from those of
its factors as follows.
\begin{cor}
\label{C:gfprod}
For updown categories $\C$ and $\D$, 
\begin{equation}
O_{\C\times\D}(t)=O_{\C}(t)O_{\D}(t)
\label{E:oprod}
\end{equation}
and
\begin{equation}
M_{\C\times\D}(t)=M_{\C}(t)O_{\D}(t^2)+O_{\C}(t^2)M_{\D}(t) .
\label{E:mprod}
\end{equation}
\end{cor}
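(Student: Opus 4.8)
The plan is to reduce both identities to algebraic consequences of Theorem~\ref{T:gfrelation} together with the multiplicativity of the inner product on a tensor product. First I would observe that $\kk(\Ob(\C\times\D))$ is naturally identified with $\kk(\Ob\C)\otimes\kk(\Ob\D)$ via $(c,d)\mapsto c\otimes d$, and under this identification the inner product of Theorem~\ref{T:adjoint} on the product is the tensor product of the corresponding inner products, since $|\Aut(c,d)|=|\Aut(c)|\,|\Aut(d)|$ by Proposition~\ref{P:prod}. Likewise the formal series factors: because $|(c,d)|=|c|+|d|$ and the automorphism orders multiply,
\[
S_{\C\times\D}(t)=\sum_{c,d}\frac{(c\otimes d)\,t^{|c|+|d|}}{|\Aut(c)|\,|\Aut(d)|}
=S_{\C}(t)\otimes S_{\D}(t).
\]

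Granting this, equation~(\ref{E:oprod}) is immediate: applying~(\ref{E:SS}) on the product and using the tensor factorization of $S$ and of $\<,\>$ gives $O_{\C\times\D}(t^2)=\<S_{\C}(t)\otimes S_{\D}(t),\,S_{\C}(t)\otimes S_{\D}(t)\>=\<S_{\C}(t),S_{\C}(t)\>\,\<S_{\D}(t),S_{\D}(t)\>=O_{\C}(t^2)O_{\D}(t^2)$, and replacing $t^2$ by $t$ yields~(\ref{E:oprod}). For~(\ref{E:mprod}) I would use~(\ref{E:USS}) on $\C\times\D$, namely $M_{\C\times\D}(t)=\<U S_{\C\times\D}(t),S_{\C\times\D}(t)\>$, after working out how the up-operator $U$ on the product decomposes. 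The covering relations of $\Ob(\C\times\D)$ are exactly those described in the proof of Theorem~\ref{T:fprod}: $(c,d)$ is covered by $(c',d)$ with $c\lhd c'$, or by $(c,d')$ with $d\lhd d'$, and one checks from Definition~\ref{D:mult} (using $\Hom((c,d),(c',d))\cong\Hom(c,c')\times\Aut(d)$) that $u((c,d);(c',d))=u(c;c')$ and $u((c,d);(c,d'))=u(d;d')$. Hence $U_{\C\times\D}=U_{\C}\otimes \mathrm{id}+\mathrm{id}\otimes U_{\D}$ on $\kk(\Ob\C)\otimes\kk(\Ob\D)$.

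Combining these,
\[
\<U_{\C\times\D}(S_{\C}\otimes S_{\D}),S_{\C}\otimes S_{\D}\>
=\<U_{\C}S_{\C},S_{\C}\>\<S_{\D},S_{\D}\>+\<S_{\C},S_{\C}\>\<U_{\D}S_{\D},S_{\D}\>,
\]
which by Theorem~\ref{T:gfrelation} equals $M_{\C}(t)O_{\D}(t^2)+O_{\C}(t^2)M_{\D}(t)$, giving~(\ref{E:mprod}). I expect the only nonroutine point is justifying the decomposition $U_{\C\times\D}=U_{\C}\otimes\mathrm{id}+\mathrm{id}\otimes U_{\D}$ — specifically verifying the two multiplicity identities $u((c,d);(c',d))=u(c;c')$ and the symmetric one from the $\Hom$-set isomorphism of Proposition~\ref{P:prod} — but this is essentially the same bookkeeping already carried out in the proofs of Proposition~\ref{P:prod} and Theorem~\ref{T:fprod}, so it should be brief. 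Everything else is formal manipulation of power series with coefficients in a tensor product of inner-product spaces.
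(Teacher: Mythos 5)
Your proposal is correct and follows essentially the same route as the paper: both rest on the identification $S_{\C\times\D}(t)=S_{\C}(t)\otimes S_{\D}(t)$, the multiplicativity of the inner product on the tensor product, and the derivation-like decomposition $U(x\otimes y)=Ux\otimes y+x\otimes Uy$, fed into Theorem~\ref{T:gfrelation}. The only difference is that you spell out the verification of $u((c,d);(c',d))=u(c;c')$ that the paper leaves implicit, which is a reasonable addition.
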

\begin{proof} We have $S_{\C\times\D}(t)=S_{\C}(t)\otimes S_{\D}(t)$
under the evident identification of $\kk(\Ob(\C\times\D))$
with $\kk(\Ob\C)\otimes\kk(\Ob\D)$.
Hence
\[
\<S_{\C\times\D}(t),S_{\C\times\D}(t)\>=\<S_{\C}(t)\otimes S_{\D}(t),
S_{\C}(t)\otimes S_{\D}(t)\>=
\<S_{\C}(t),S_{\C}(t)\>\<S_{\D}(t),S_{\D}(t)\>,
\]
and equation (\ref{E:oprod}) follows using equation (\ref{E:SS}).
Similarly, we have
\begin{multline*}
\<US_{\C\times\D}(t),S_{\C\times\D}(t)\>=
\<U(S_{\C}(t)\otimes S_{\D}(t)),S_{\C}(t)\otimes S_{\D}(t)\>=\\
\<US_{\C}(t)\otimes S_{\D}(t)+S_{\C}(t)\otimes US_{\D}(t),
S_{\C}(t)\otimes S_{\D}(t)\>=\\
\<US_{\C}(t),S_{\C}(t)\>\<S_{\D}(t),S_{\D}(t)\>+
\<S_{\C}(t),S_{\C}(t)\>\<US_{\D}(t),S_{\D}(t)\>
\end{multline*}
from which equation (\ref{E:mprod}) follows via equation (\ref{E:USS}).
\end{proof}
\emph{Remark.}
It follows from the preceding result that
\[
O_{\C^n}(t)=(O_{\C}(t))^n\quad\text{and}\quad
M_{\C^n}(t)=nM_{\C}(t)(O_{\C}(t^2))^{n-1}
\]
where $\C^n$ is the $n$-fold product of $\C$.  
\par
If $\C$ is evenly up-covered or evenly down-covered, there
is a direct relation between the object and morphism generating functions.  
\begin{thm}
\label{T:gfsum}
Let $\C$ be an updown category with 
$$
O_{\C}(t)=\sum_{n\ge 0} a_nt^n .
$$
\begin{itemize}
\item[1.] If $\C$ is evenly up-covered, then
$$
M_{\C}(t)=\sum_{n\ge 0} a_nu_n t^{2n+1} .
$$
\item[2.] If $\C$ is evenly down-covered, then
$$
M_{\C}(t)=\sum_{n\ge 1} a_nd_n t^{2n-1} .
$$
\end{itemize}
\end{thm}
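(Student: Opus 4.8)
The plan is to read off the coefficient of each power of $t$ in $M_\C(t)$ directly from the defining sums and use the even-covering hypothesis to collapse the innermost summation. First note that, by Definition \ref{D:gf}, the coefficient $a_n$ of $t^n$ in $O_\C(t)$ is precisely $\sum_{p\in\C_n}1/|\Aut(p)|$, so the whole problem reduces to showing that the coefficient of $t^{2n+1}$ (resp.\ $t^{2n-1}$) in $M_\C(t)$ equals $u_n a_n$ (resp.\ $d_n a_n$).

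For part~1, I would start from the expression for $M_\C(t)$ in equation (\ref{E:mgf1}). Only rank-$n$ pairs contribute to $t^{2n+1}$, so its coefficient is $\sum_{p\in\C_n}\tfrac1{|\Aut(p)|}\sum_{q\in\C_{n+1}}u(p;q)$. Since $u(p;q)=0$ for $q\in\C_{n+1}$ unless $q\rhd p$ (by the consequence of Theorem \ref{T:trans} recorded after Definition \ref{D:po}), the inner sum is $\sum_{q\rhd p}u(p;q)=u_n$ by the evenly-up-covered hypothesis. Factoring the constant $u_n$ out of the finite sum leaves $u_n\sum_{p\in\C_n}1/|\Aut(p)|=u_n a_n$, which is the claim.

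For part~2, the symmetric argument works once I use the alternative form (\ref{E:mgf2}) of $M_\C(t)$, namely $\sum_{p,q\in\Ob\C,\ p\lhd q} d(p;q)t^{|p|+|q|}/|\Aut(q)|$, and organize the sum by the larger object: if $|q|=n$ then $|p|=n-1$ and $|p|+|q|=2n-1$, so the coefficient of $t^{2n-1}$ is $\sum_{q\in\C_n}\tfrac1{|\Aut(q)|}\sum_{p\lhd q}d(p;q)$. The inner sum is $d_n$ by the evenly-down-covered hypothesis, and pulling it out gives $d_n\sum_{q\in\C_n}1/|\Aut(q)|=d_n a_n$; the summation index starts at $n=1$ because a covered object exists only in positive rank. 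All manipulations are termwise rearrangements of finite sums, valid in $\mathbb Q[[t]]$.

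There is essentially no obstacle beyond bookkeeping; the only point that needs a moment's care is picking the right representation of $M_\C(t)$ for each case — the $u$-form indexed by the smaller object in part~1, the $d$-form indexed by the larger object in part~2 — so that the even-covering sum appears intact as the inner summation and the remaining factor is exactly $a_n$.
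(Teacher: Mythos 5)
Your proof is correct and is exactly the argument the paper intends: the paper's proof is the one-line remark ``Immediate from equations (\ref{E:mgf1}) and (\ref{E:mgf2}) respectively,'' and you have simply written out the termwise collapse of the inner sums that this remark leaves implicit. No issues.
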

\begin{proof} Immediate from equations (\ref{E:mgf1}) and (\ref{E:mgf2})
respectively.
\end{proof}
\emph{Remark.} Two consequences of the second part are:
(i) if $\C$ is simple, then $O_{\C}(t^2)=1+tM_{\C}(t)$; and
(ii) if $\C$ is factorial, then $M_{\C}(t)=tO_{\C}'(t^2)$.
\par
If the updown category $\C$ is both evenly up-covered and evenly 
down-covered, the preceding result gives two expressions for 
$M_{\C}(t)$ which must agree.  This gives us the following 
result.
\begin{cor}
\label{C:eucedc}
Suppose the updown category $\C$ is both evenly up-covered (with
sequence $\{u_n\}$) and evenly down-covered (with sequence $\{d_n\}$).
Then $a_nu_n=a_{n+1}d_{n+1}$ for all $n\ge 0$, where
$O_{\C}(t)=\sum_{n\ge 0} a_nt^n$.  In particular, if $\C$ is
evenly up-covered and factorial, then $a_0=|\Aut\hat 0|^{-1}$ and
$$
a_n=\frac{u_0u_1\cdots u_{n-1}}{n!|\Aut\hat 0|} ,\quad n\ge 1.
$$
\end{cor}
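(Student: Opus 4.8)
The plan is to derive Corollary \ref{C:eucedc} directly from Theorem \ref{T:gfsum} by matching coefficients. First I would invoke both parts of Theorem \ref{T:gfsum}: since $\C$ is evenly up-covered with sequence $\{u_n\}$, the morphism generating function is $M_{\C}(t)=\sum_{n\ge 0}a_nu_nt^{2n+1}$; since $\C$ is also evenly down-covered with sequence $\{d_n\}$, we also have $M_{\C}(t)=\sum_{n\ge 1}a_nd_nt^{2n-1}$. Re-indexing the second sum by replacing $n$ with $n+1$ gives $M_{\C}(t)=\sum_{n\ge 0}a_{n+1}d_{n+1}t^{2n+1}$. Equating the coefficients of $t^{2n+1}$ in the two expressions yields $a_nu_n=a_{n+1}d_{n+1}$ for all $n\ge 0$, which is the first assertion.

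For the second assertion, suppose in addition that $\C$ is factorial, so that $d_n=n$ for all $n\ge 1$ (this is the defining property of factorial, since $\sum_{q\lhd p}d(q;p)=|p|$ means $d_n=n$). The base case $a_0=|\Aut\hat 0|^{-1}$ is immediate from the definition of $O_{\C}(t)$: the only object of rank $0$ is $\hat 0$, so the coefficient of $t^0$ in $O_{\C}(t)=\sum_{p\in\Ob\C}t^{|p|}/|\Aut(p)|$ is exactly $1/|\Aut\hat 0|$. Then I would run an induction on $n$: the recurrence $a_nu_n=a_{n+1}d_{n+1}=(n+1)a_{n+1}$ gives $a_{n+1}=\dfrac{a_nu_n}{n+1}$, and unwinding this starting from $a_0$ produces
$$
a_n=\frac{u_0u_1\cdots u_{n-1}}{n!\,|\Aut\hat 0|},\qquad n\ge 1.
$$

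There is essentially no obstacle here: the entire argument is bookkeeping with power-series coefficients, and all the substantive content has already been packaged into Theorem \ref{T:gfsum}. The only point requiring a moment's care is the re-indexing of the evenly-down-covered sum so that both series are expressed as sums over the same exponents $t^{2n+1}$; once that is done, comparing coefficients is immediate, and the factorial specialization is a routine telescoping of the resulting two-term recurrence. I would keep the write-up to a few lines accordingly.
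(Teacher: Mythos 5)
Your proof is correct and is exactly the argument the paper intends: the sentence preceding the corollary states that the two expressions for $M_{\C}(t)$ from Theorem \ref{T:gfsum} must agree, and your coefficient comparison plus the telescoping of the recurrence in the factorial case ($d_n=n$, $a_0=|\Aut\hat 0|^{-1}$) is precisely that. Nothing to add.
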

\section{Univalent Updown Categories, Weighted-relation Posets, and 
Universal Covers}
\label{S:uuccs}
Let $\U$ be the category of updown categories, $\UU$ the full subcategory
of univalent updown categories.  For a functor $F$  between univalent 
updown categories $\C$, $\D$, Definition \ref{D:morph} reduces to the 
requirement that $F$ preserve rank and that the induced function
\begin{equation}
\label{E:moruu}
\coprod_{\{q':F(q')=F(q)\}}\Hom(p,q')\to\Hom(F(p),F(q))
\end{equation}
be injective whenever $p,q\in\Ob\C$ with $|q|=|p|+1$.
\par
The notion of a weighted-relation poset was defined in \cite{H1}.
This consists of a ranked poset
$$
P=\bigcup_{n\ge 0} P_n
$$
with a least element $\hat 0\in P_0$, together with nonnegative
integers $n(x,y)$ for each $x,y\in P$ so that $n(x,y)=0$ unless
$x\preceq y$, and
\begin{equation}
n(x,y)=\sum_{|z|=k}n(x,z)n(z,y)
\label{E:wrp}
\end{equation}
whenever $|x|\le k\le |y|$.
A morphism of weighted-relation posets $P,Q$ is a rank-preserving map 
$f:P\to Q$ such that
\begin{equation}
n(f(t),f(s))\ge \sum_{s'\in f^{-1}(f(s))} n(t,s')
\label{E:mwrp}
\end{equation}
for any $s,t\in P$ with $|s|=|t|+1$.  
Let $\W$ be the category of weighted-relation posets.
\par
Given an updown category $\C$, it follows from Theorem \ref{T:trans}
that the weight functions $n(x,y)=u(x;y)$ and $n(x,y)=d(x;y)$
on the poset $\Ob\C$ (with the partial order defined by Definition
\ref{D:po}) both satisfy equation (\ref{E:wrp}).  So we have two 
weighted-relation posets based on $\Ob\C$ corresponding
to these two sets of weights.  In fact, we can describe
them functorially.
\par
If $\C$ is an updown category, we can form a univalent updown 
category $\C^\up$ with $\Ob\C^\up=\Ob\C$, and with $\Hom_{\C^\up}(p,p')$ 
defined as follows.
We declare $\Hom_{\C^\up}(p,p)=\Aut_{\C^\up}(p)$ trivial for all $p$,
and for $|p'|>|p|$ define $\Hom_{\C^\up}(p,p')$ as 
the set of equivalence classes in $\Hom_{\C}(p,p')$ under the relation
$f_n f_{n-1}\cdots f_1\sim\al_n f_n \cdots \al_1 f_1$, where
each $f_i$ is a morphism between adjacent ranks and $\al_i\in\Aut(\trg f_i)$. 
It is routine to check that $\C^\up$ satisfies the axioms of an updown
category, and for $p,p'\in\Ob\C$ with $|p'|=|p|+1$ the multiplicity is
$$
|\Hom_{\C^\up}(p,p')|=\left|\Hom_{\C}(p,p')/\Aut_{\C}(p')\right|=u(p;p') .
$$
Of course $\C^\up$ coincides with $\C$ if $\C$ is univalent.
\par
Similarly, for any updown category $\C$ there is a univalent updown
category $\C^\dn$ with $\Ob\C^\dn=\Ob\C$, trivial automorphisms, and
$\Hom_{\C^\dn}(p,p')$ defined as the set of equivalence classes 
in $\Hom_{\C}(p,p')$ under the relation $f\sim f_n\be_nf_{n-1}\cdots f_1\be_1$
for $f=f_nf_{n-1}\cdots f_1$ a factorization of $f\in\Hom_{\C}(p,p')$ 
into morphisms between adjacent ranks and $\be_i\in\Aut(\src f_i)$.
Then 
$$
|\Hom_{\C^\dn}(p,p')|=\left|\Hom_{\C}(p,p')/\Aut_{\C}(p)\right|=d(p;p')
$$
for $p,p'\in\Ob\C$ with $|p'|=|p|+1$.
We have the following result.
\begin{thm}
\label{T:updnfunc}
There are two functors $\U\to\UU$, taking an updown
category $\C$ to $\C^\up$ and $\C^\dn$ respectively.
\end{thm}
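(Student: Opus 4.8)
The plan is to verify that the two assignments $\C\mapsto\C^\up$ and $\C\mapsto\C^\dn$ are functorial, i.e.\ that a morphism $F:\C\to\D$ of updown categories induces morphisms $F^\up:\C^\up\to\D^\up$ and $F^\dn:\C^\dn\to\D^\dn$ in a way compatible with composition and identities. Since $\Ob\C^\up=\Ob\C$ and $\Ob\C^\dn=\Ob\C$, the induced functors act as $F$ does on objects; the content is (i) that $F$ descends to a well-defined functor on the quotient $\Hom$-sets, and (ii) that this descended functor still satisfies the injectivity condition of Definition \ref{D:morph}, which for univalent targets reduces to injectivity of the map (\ref{E:moruu}). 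Functoriality in $F$ (i.e.\ $(GF)^\up=G^\up F^\up$ and $(\id_\C)^\up=\id_{\C^\up}$, similarly for $\dn$) will then be immediate from the construction, since on objects everything is literally $F$, and on morphisms the descended maps are induced by $F$ on representatives.

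I would treat $\C^\up$ first; the $\dn$ case is dual. For well-definedness of $F^\up$ on $\Hom$-sets: given $f=f_nf_{n-1}\cdots f_1\in\Hom_\C(p,p')$ with each $f_i$ between adjacent ranks, $F$ sends it to $F(f_n)\cdots F(f_1)$, a factorization in $\D$ into morphisms between adjacent ranks (using that $F$ preserves rank). If $f\sim\al_nf_n\cdots\al_1f_1$ with $\al_i\in\Aut(\trg f_i)$, then applying $F$ and using that $F$ is a functor gives $F(\al_i)\in\Aut(F(\trg f_i))=\Aut(\trg F(f_i))$, so the images are equivalent under the relation defining $\Hom_{\D^\up}$. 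Hence $F^\up$ is a well-defined map on equivalence classes, and it is visibly functorial (composition of representatives maps to composition of representatives). One should also check it preserves rank, which it does since $F$ does.

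The main obstacle is (ii): showing $F^\up$ satisfies the injectivity condition for being a morphism of updown categories, namely that for $p,q\in\Ob\C$ with $|q|=|p|+1$ the induced map $\coprod_{\{q':F(q')=F(q)\}}\Hom_{\C^\up}(p,q')\to\Hom_{\D^\up}(F(p),F(q))$ is injective. Here is where the multiplicity identities pay off: $\Hom_{\C^\up}(p,q')=\Hom_\C(p,q')/\Aut_\C(q')$ by the computation in the text (at adjacent ranks), and similarly on the $\D$ side, so the $\C^\up$-map is exactly the map obtained from the original $F$-induced map $\coprod_{q'}\Hom_\C(p,q')\to\Hom_\D(F(p),F(q))$ by passing to orbits under $\Aut(q')$ upstairs and $\Aut(F(q))$ downstairs. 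One of the three injectivity hypotheses on $F$ in Definition \ref{D:morph} is precisely that $\coprod_{q'}\Hom_\C(p,q')/\Aut_\C(q')\to\Hom_\D(F(p),F(q))/\Aut_\D(F(q))$ is injective --- that is literally the required statement for $\C^\up$. The $\dn$ side uses instead the hypothesis that $\coprod_{q'}\Hom_\C(p,q')/\Aut_\C(p)\to\Hom_\D(F(p),F(q))/\Aut_\D(F(p))$ is injective, together with the analogous description $\Hom_{\C^\dn}(p,q')=\Hom_\C(p,q')/\Aut_\C(p)$. So the technical core reduces to matching the two quotients against the two relevant clauses of Definition \ref{D:morph}; the only care needed is to confirm that the equivalence relation defining $\Hom_{\C^\up}$ (resp.\ $\Hom_{\C^\dn}$) really does, at adjacent ranks, give the $\Aut(q')$-orbits (resp.\ $\Aut(p)$-orbits), which is exactly the displayed multiplicity computation already in the text. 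Finally I would remark that $\C^\up=\C$ when $\C$ is univalent, so these functors restrict to the identity on $\UU$, and note the dual remark for $\C^\dn$.
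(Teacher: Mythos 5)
Your proposal is correct and follows essentially the same route as the paper: define $F^\up$ (resp.\ $F^\dn$) as $F$ on objects and as $[f]\mapsto[F(f)]$ on equivalence classes, then observe that the injectivity required of $F^\up$ (resp.\ $F^\dn$) by Definition \ref{D:morph} in the univalent setting is literally the third (resp.\ second) injectivity clause already imposed on $F$, via the identification $\Hom_{\C^\up}(p,q')=\Hom_{\C}(p,q')/\Aut_{\C}(q')$ at adjacent ranks. Your write-up is in fact slightly more careful than the paper's, since it explicitly verifies well-definedness of the descended map on equivalence classes and compatibility with composition, which the paper leaves implicit.
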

\begin{proof}
We first consider the ``up'' functor.  For a morphism $F:\C\to\D$
of updown categories, there is an induced functor $F^\up:\C^\up\to\D^\up$
of univalent updown categories: $F^\up(p)=F(p)$ for $p\in\Ob\C$, and
$F^{\up}$ sends the equivalence class $[f]$, where $f\in\Hom_{\C}(p,q)$,
to the equivalence class $[F(f)]\in\Hom_{\D^\up}(F(p),F(q))$.
Now Definition \ref{D:morph} requires that $F$ preserve rank and that 
the induced function
$$
\coprod_{\{q':F(q')=F(q)\}}\Hom_{\C}(p,q')/\Aut_{\C}(p')\to 
\Hom_{\D}(F(p),F(q))/\Aut_{\D}(F(q))
$$
be injective for all $p,q\in\Ob\C$ with $|q|=|p|+1$.  This is
exactly the statement that the induced functor $F^\up$ is a
morphism of univalent updown categories.
The proof for the ``down'' functor is similar.
\end{proof}
\par
Note that the functors of the preceding result respect products, 
e.g., $(\C\times\D)^\up$ can be naturally identified with $\C^\up\times\D^\up$.
Note also that $\C^\up$ is evenly up-covered if $\C$ is, and $\C^\dn$ is
evenly-down covered if $\C$ is.
Now we pass from univalent updown categories to weighted-relation posets.
\begin{thm}
\label{P:wrpfunc}
There is a functor $Wrp:\UU\to\W$, sending
a univalent updown category $\C$ to the set $\Ob\C$ with
the partial order of Definition \ref{D:po} and the
weight function $n(x,y)=u(x;y)=d(x;y)$.
\end{thm}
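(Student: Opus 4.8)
The plan is to check three things in turn: that $Wrp(\C)$ is a genuine weighted-relation poset, that a morphism of univalent updown categories induces a morphism of weighted-relation posets on underlying sets, and that these assignments respect identities and composition. Only the second step carries any real content.

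First I would handle the object assignment. Given a univalent $\C$, the underlying ranked set is $\Ob\C$ with the partial order $\preceq$ of Definition~\ref{D:po} and rank $|\cdot|$; axiom A2 supplies the least element $\hat 0$ (since $\Hom(\hat 0,p)\ne\emptyset$ for every $p$), and A1 makes each rank finite. Because $\C$ is univalent, $u(x;y)=d(x;y)$ for all $x,y$, so $n(x,y)=u(x;y)$ is a well-defined nonnegative integer; it vanishes unless $x\preceq y$ by the remark following Definition~\ref{D:po}, and $n(x,x)=|\Hom(x,x)|=|\Aut(x)|=1$. The only remaining condition, equation~(\ref{E:wrp}) for this $n$, is exactly the statement of Theorem~\ref{T:trans}. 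Hence $Wrp(\C)$ lies in $\Ob\W$.

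Next comes the morphism assignment, which is the heart of the matter. For $F:\C\to\D$ in $\UU$ I would let $Wrp(F)$ be the same map of underlying object sets; it preserves rank by hypothesis, so the task is to verify inequality~(\ref{E:mwrp}). For this I would use the identity $u(x;y)=|\Hom(x,y)|$, valid on covers in a univalent updown category, together with the observation that rank preservation forces every $s'$ with $F(s')=F(s)$ to satisfy $|s'|=|s|=|t|+1$. Consequently the left-hand side of~(\ref{E:mwrp}), namely $\sum_{s'\in F^{-1}(F(s))} n(t,s')$, is precisely the cardinality of the coproduct $\coprod_{\{s':F(s')=F(s)\}}\Hom_\C(t,s')$ that appears in the simplified morphism axiom~(\ref{E:moruu}) applied with $p=t$ and $q=s$; since that axiom says the map from this coproduct into $\Hom_\D(F(t),F(s))$ is injective, and $|\Hom_\D(F(t),F(s))|=n(F(t),F(s))$, inequality~(\ref{E:mwrp}) follows.

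Finally, functoriality is immediate: $Wrp$ is literally the identity on underlying object sets and on underlying functions, so it sends $\id_\C$ to $\id_{Wrp(\C)}$ and $G\circ F$ to $Wrp(G)\circ Wrp(F)$; one needs only the already-established fact that $\W$ is a category to know the composite is again a $\W$-morphism. The single place demanding care, and the only genuine obstacle, is the index-set matching in the morphism step: the sum in~(\ref{E:mwrp}) a priori ranges over all of $f^{-1}(f(s))$, and I would point out explicitly that rank preservation confines it to objects of rank $|t|+1$, so that it coincides term by term with the coproduct indexing the morphism axiom~(\ref{E:moruu}) and the univalent identity $u=|\Hom|$ can be invoked on each term.
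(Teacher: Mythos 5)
Your proof is correct and follows essentially the same route as the paper: the object-level verification reduces to Theorem \ref{T:trans} (which the paper dispatches in the paragraph preceding the statement), and the morphism check is exactly the paper's argument, deducing inequality (\ref{E:mwrp}) from the injectivity of the map (\ref{E:moruu}) together with the identity $n(t,s')=|\Hom(t,s')|$ on covers in the univalent case. Your extra care about the index set being confined to rank $|t|+1$ by rank preservation is a point the paper leaves implicit but is handled the same way.
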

\begin{proof} The only thing to check is the morphisms.  Suppose
$F:\C\to\D$ is a morphism of $\UU$.  Then $F$ defines a function 
on the object sets, and the function (\ref{E:moruu}) is injective.
Hence
$$
\sum_{\{q':F(q')=F(q)\}}|\Hom(p,q')|\le |\Hom(F(p),F(q))|
$$
and so (since, e.g., $n(p,q')=|\Hom(p,q')|$), inequality
(\ref{E:mwrp}) holds and $F$ induces a morphism of weighted-relation
posets.
\end{proof}
\par
As defined in \cite{H1},
a morphism $f:P\to Q$ of weighted-relation posets is a covering map 
if $f$ is surjective and the inequality (\ref{E:mwrp}) is an equality.  
A universal cover $\tilde P$ of $P$ is a cover $\tilde P\to P$ such 
that, if $P'\to P$ is any other cover, then there is a covering 
map $\tilde P\to P'$ so that the composition $\tilde P\to P'\to P$
is the cover $\tilde P\to P$.
In \cite{H1} such a universal cover was constructed for any 
weighted-relation poset $P$.
\par
In fact, the construction of \cite{H1} can be made considerably
simpler and more natural if we work instead with univalent updown
categories.  We first categorify the definition of covering map.
\begin{defn}
\label{D:cover}
A morphism $\pi:\C'\to\C$ of univalent updown categories is a 
covering map if $\pi$ is surjective on the object sets and the 
induced function
\begin{equation}
\coprod_{\{q':\pi(q')=\pi(q)\}}\Hom(p,q')\to\Hom(\pi(p),\pi(q))
\label{E:cover}
\end{equation}
is a bijection for all $p,q\in\Ob\C'$ with $|q|=|p|+1$.  A covering
map $\pi:\tilde\C\to\C$ is universal if for any other covering map
$\phi:\C'\to\C$ there is a covering map $\psi:\tilde\C\to\C'$ with
$\pi=\phi\psi$.
\end{defn}
Then we have the following result.
\begin{thm}
\label{T:ucover}
Every univalent updown category $\C$ has a universal cover $\tilde\C$.
\end{thm}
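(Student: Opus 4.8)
The plan is to realize $\tilde\C$ as the \emph{path category} of $\C$: its objects are the chains of morphisms of $\C$ between adjacent ranks emanating from $\hat 0$, and there is exactly one morphism between two such chains when the first is an initial segment of the second. Concretely, let the rank-$n$ objects of $\tilde\C$ be the strings $(h_1,\dots,h_n)$ with each $h_i$ a morphism of $\C$ satisfying $|\trg h_i|=|\src h_i|+1$, with $\src h_1=\hat 0$ and $\trg h_i=\src h_{i+1}$; the empty string $()$ is the unique rank-$0$ object and plays the role of $\hat 0$. All automorphism groups are declared trivial, and $\Hom_{\tilde\C}((h_1,\dots,h_m),(h'_1,\dots,h'_n))$ is a one-element set if $m\le n$ and $h_i=h'_i$ for $i\le m$, and empty otherwise, composition being forced. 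Axiom A1 holds because only finitely many such strings exist (using A1 and A3 for $\C$), A2 is immediate, A3 holds because an initial segment of a string of equal length is that string, A4 follows by inserting successive initial segments, and A5 is vacuous. Thus $\tilde\C$ is a univalent updown category, indeed a simple one whose Hasse diagram is a rooted tree.

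Next I define the projection $\pi\colon\tilde\C\to\C$ by $\pi(h_1,\dots,h_n)=\trg h_n$ (and $\pi(())=\hat 0$), sending the unique morphism $(h_1,\dots,h_m)\to(h_1,\dots,h_n)$ to $h_n h_{n-1}\cdots h_{m+1}$; this is a rank-preserving functor, surjective on objects since by A2 and A4 every object of $\C$ is the target of a chain of adjacent-rank morphisms from $\hat 0$. For $\sigma=(h_1,\dots,h_m)$ with $|\sigma|=m$ and $|\tau|=m+1$, the objects $\tau'$ over $\pi(\tau)$ admitting a morphism from $\sigma$ are precisely the strings $(h_1,\dots,h_m,h)$ with $h\in\Hom_\C(\pi(\sigma),\pi(\tau))$, and the single morphism $\sigma\to\tau'$ is sent by $\pi$ to $h$; hence the map (\ref{E:cover}) for $\pi$ is the identity of $\Hom_\C(\pi(\sigma),\pi(\tau))$, so $\pi$ is a covering map.

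For universality, suppose $\phi\colon\C'\to\C$ is any covering map. I construct $\psi\colon\tilde\C\to\C'$ with $\phi\psi=\pi$ by induction on rank: set $\psi(())=\hat 0_{\C'}$, and, given $\psi$ on ranks $\le n$ with $\phi\psi=\pi$, take $\tau=(h_1,\dots,h_{n+1})$ covering $\sigma=(h_1,\dots,h_n)$. Then $h_{n+1}$ lies in $\Hom_\C(\phi(\psi(\sigma)),\trg h_{n+1})$, and since $\phi$ is a surjective covering map the bijectivity of (\ref{E:cover}) for $\phi$ with source object $\psi(\sigma)$ yields a unique object $r'$ of $\C'$ with $\phi(r')=\trg h_{n+1}$ together with a unique $g\in\Hom_{\C'}(\psi(\sigma),r')$ with $\phi(g)=h_{n+1}$; put $\psi(\tau)=r'$ and send the unique morphism $\sigma\to\tau$ to $g$. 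Because $\tilde\C$ has at most one morphism between objects and each morphism factors uniquely through its successive initial segments, these assignments extend (uniquely) to a functor, and $\phi\psi=\pi$ holds by construction.

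It remains to check that $\psi$ is itself a covering map, which I expect to be the only delicate step. Surjectivity on objects follows by taking, for $p'\in\Ob\C'$, a chain $g_n,\dots,g_1$ of adjacent-rank morphisms of $\C'$ from $\hat 0_{\C'}$ with target $p'$ (A2, A4) and observing that the inductive construction forces $\psi(\phi(g_1),\dots,\phi(g_k))=\trg g_k$, whence $\psi(\phi(g_1),\dots,\phi(g_n))=p'$. For the bijection condition with $\sigma$ of rank $m$ and $|\tau|=m+1$, write $s'=\psi(\sigma)$ and $t'=\psi(\tau)$; the objects $\tau'$ over $t'$ receiving a morphism from $\sigma$ are the strings $(h_1,\dots,h_m,h)$ for which the lift of $h$ out of $s'$ supplied by $\phi$'s covering property has target $t'$, and then the single morphism $\sigma\to\tau'$ is sent to that lift. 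The injectivity half of the covering property of $\phi$ at $(s',t')$ shows that every $g\in\Hom_{\C'}(s',t')$ is the unique such lift of $h=\phi(g)$, so $h\mapsto g$ is a bijection from this index set onto $\Hom_{\C'}(s',t')$; that is exactly the bijectivity of (\ref{E:cover}) for $\psi$. Hence $\psi$ is a covering map and $\pi\colon\tilde\C\to\C$ is universal. The genuine content is this last verification — that lifting $\pi$ through an arbitrary cover $\phi$ again produces a cover — which rests on the full bijectivity in Definition \ref{D:cover} for $\phi$, not merely on $\phi$ being a morphism; everything else is bookkeeping with the axioms.
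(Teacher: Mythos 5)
Your construction is exactly the paper's: $\tilde\C$ is the category of strings of adjacent-rank morphisms out of $\hat 0$ with inclusions of initial segments, $\pi$ sends a string to the target of its last entry, and universality is obtained by inductively lifting each $h_{n+1}$ through the bijection in Definition \ref{D:cover} for the given cover $\phi$. The proof is correct and follows the same route; if anything you are more complete than the paper, which asserts that the induced map $\psi$ is a covering map without carrying out the verification of surjectivity and of the bijection (\ref{E:cover}) for $\psi$ that you supply.
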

\begin{proof}
We define $\tilde\C$ to be the category whose rank-$n$ objects are
strings $(f_1,f_2,\dots,f_n)$ of morphisms $f_i\in\Hom(c_{i-1},c_i)$,
where $c_i\in\C_i$, and whose morphisms are just inclusions of
strings.  It is straightforward to verify that $\tilde\C$ is a
univalent updown category (with $\hat 0_{\tilde\C}$ the empty string).  
Define the functor $\pi:\tilde\C\to\C$ by sending the empty
string to $\hat 0\in\Ob\C$, the nonempty string
$(f_1,\dots,f_n)$ of $\tilde\C$ to the target of $f_n$ in $\Ob\C$,
and the inclusion $(f_1,\dots,f_j)\subset (f_1,\dots,f_n)$ to
the morphism $f_nf_{n-1}\cdots f_{j+1}\in\Hom(c_j,c_n)$.
That the induced function (\ref{E:cover}) is a bijection is a tautology.
\par
Now let $P:\C'\to\C$ be another cover of $\C$.  To define a covering map 
$F:\tilde\C\to\C'$ with $\pi=PF$, we proceed by induction on rank.  
Start by sending the empty string in $\tilde\C_0$ to the element 
$\hat 0$ of $\C'$.  
Now suppose $F$ is defined through rank $n-1$, and
consider a rank-$n$ object $(f_1,\dots,f_n)$ of $\tilde\C$.
Let $c_n=\pi(f_1,\dots,f_n)$.  By the induction hypothesis we
have $c_{n-1}'=F(f_1,\dots,f_{n-1})\in\Ob\C'$, and $c_{n-1}=P(c_{n-1}')$
is the target of $f_{n-1}$, hence the source of $f_n$.
Since
$$
P:\coprod_{\{c':p(c')=c_n\}}\Hom(c_{n-1}',c')\to\Hom(c_{n-1},c_n)
$$
is a bijection, there is a unique morphism $g$ of $\C'$ with
$\src(g)=c_{n-1}'$ sent to $f_n:c_{n-1}\to c_n$.  We define $F(f_1,\dots,f_n)$
to be $\trg(g)$, and the image of the inclusion of $(f_1,\dots,f_{n-1})$
in $(f_1,\dots,f_n)$ to be $g$.  This actually defines the functor
$F$ through rank $n$, since by the induction hypothesis $F$ assigns
to the inclusion of any proper substring $(f_1,\dots,f_k)$ in
$(f_1,\dots,f_{n-1})$ a morphism $h$ from $F(f_1,\dots,f_k)$ to $c_{n-1}'$
in $\C'$; then $F$ sends the inclusion of $(f_1,\dots,f_k)$ in 
$(f_1,\dots,f_n)$ to $gh$.
\end{proof}
\par
\emph{Remark.}
Thinking of the functor $\pi:\tilde\C\to\C$ as a function on the object sets, 
the number of elements of $\tilde\C_n$ which $\pi$ sends to $p\in\C_n$ is 
\begin{equation}
\label{E:fiber}
|\pi^{-1}(p)|=u(\hat 0;p)=d(\hat 0;p)=\frac{\<\hat 0, D^np\>}{|\Aut\hat 0|} ,
\end{equation}
as follows from Theorem \ref{T:count}.
In particular, if $\C$ is factorial then $|\pi^{-1}(p)|=n!$ and
$|\tilde\C_n|=n!|\C_n|$.
Also, it follows from equation (\ref{E:fiber}) that
\begin{equation}
\label{E:coversum}
|\tilde\C_n|=\sum_{p\in\C_n}u(\hat0;p) .
\end{equation}
If $\C$ is evenly up-covered, this equation and equation
(\ref{E:ufact}) imply that
\begin{equation}
\label{E:covereuc}
|\tilde\C_n|=u_0u_1\cdots u_{n-1} .
\end{equation}
\par
The construction of $\tilde\C$ in the the proof of Theorem
\ref{T:ucover} is functorial:  given a morphism $F:\C\to\D$ of
univalent updown categories, we have a morphism $\tilde F:\tilde\C\to
\tilde\D$ given by
$$
\tilde F(f_1,f_2,\dots,f_n)=(F(f_1),F(f_2),\dots,F(f_n)).
$$
Also, the updown category $\tilde\C$ is evidently simple.
Thus, if $\SU$ is the full subcategory of simple updown
categories in $\U$, then there is a functor $\UU\to\SU$
taking $\C$ to $\tilde\C$.  In fact, we have the following
result.
\begin{prop}
\label{P:rtadj}
The functor $\UU\to\SU$ taking $\C$ to $\tilde\C$
is right adjoint to the inclusion functor $\SU\to\UU$.
\end{prop}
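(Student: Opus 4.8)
The plan is to establish the adjunction by exhibiting, for every univalent updown category $\C$ and every simple updown category $\SS$, a natural bijection
$$
\Hom_{\SU}(\SS,\tilde\C)\cong\Hom_{\UU}(\SS,\C),
$$
where on the left we use that $\SU$ is a full subcategory of $\U$ (so morphisms are morphisms of updown categories), and on the right we regard $\SS$ as a univalent updown category via the inclusion $\SU\hookrightarrow\UU$. The unit and counit will be built from the structure already visible in the proof of Theorem~\ref{T:ucover}: the counit is the covering map $\pi:\tilde\C\to\C$ (which is in particular a morphism of $\UU$), and the unit $\eta_{\SS}:\SS\to\widetilde{\SS}$ will send a rank-$n$ object $s$ to the unique string $(f_1,\dots,f_n)$ of morphisms between adjacent ranks whose composite is the unique morphism $\hat0\to s$ — this is well-defined precisely because $\SS$ is simple (uniqueness of factorization in axiom A4, together with $|\Hom|\le1$).

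**First** I would check that $\eta_{\SS}$ is a morphism of updown categories: it preserves rank by construction, and since $\SS$ and $\widetilde{\SS}$ are both simple the injectivity condition of Definition~\ref{D:morph} reduces (as in the proof of Proposition~\ref{P:simple}) to checking that $\eta_{\SS}$ is injective on objects and on each $\Hom(p,q)$ with $|q|=|p|+1$; injectivity on objects follows because $s$ can be recovered from its string as the target of $f_n$, and on $\Hom$-sets it is immediate since those sets have at most one element and $\eta_{\SS}$ is a functor. **Next** I would verify the two triangle identities. Given $G:\SS\to\C$ in $\UU$, the corresponding map $\SS\to\tilde\C$ is $\tilde G\circ\eta_{\SS}$, which sends $s$ to $(G(f_1),\dots,G(f_n))$; one checks directly that $\pi\circ\tilde G\circ\eta_{\SS}=G$ on objects (the target of $G(f_n)$ is $G(s)$) and on morphisms. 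Conversely, given $H:\SS\to\tilde\C$ in $\SU$, I would show $\widetilde{(\pi H)}\circ\eta_{\SS}=H$; this is the computation that for a rank-$n$ object $s$ of $\SS$, the string $H(s)=(g_1,\dots,g_n)$ (an object of $\tilde\C$, i.e., a string of adjacent-rank morphisms in $\C$) is reconstructed by applying $\pi$ to the inclusions $\emptyset\subset(g_1)\subset\cdots\subset(g_1,\dots,g_n)$ that are the images under $H$ of the corresponding chain of inclusions in $\SS$, and then reassembling — here one uses crucially that $\SS$ is simple, so that chain of inclusions is the \emph{unique} factorization of $\hat0\to s$, and that $\pi$ on $\tilde\C$ is defined exactly by composing the morphisms $f_{j+1},\dots,f_n$.

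**The main obstacle** I expect is the bookkeeping in the second triangle identity: one must see that an arbitrary morphism $H:\SS\to\tilde\C$ in $\SU$ is already determined by where it sends objects and the "long" inclusions $\hat0\subset s$, and then that passing through $\widetilde{(\pi H)}\circ\eta_{\SS}$ returns $H$ on \emph{all} morphisms, not just those out of $\hat0$. This comes down to unwinding the definition of $\tilde F$ on morphisms (componentwise application of $\pi H$ to a string of adjacent-rank morphisms) against the definition of $\pi$ on $\tilde\C$ (composition), and checking compatibility of $H$ with the factorization structure; since everything in sight is simple this is forced, but it requires writing out one diagram carefully. **Finally**, naturality of the bijection in both $\SS$ and $\C$ follows routinely from the naturality of $\pi$ (built into the functoriality of $\C\mapsto\tilde\C$) and of $\eta$ (which I would check by noting that $\eta$ commutes with any morphism of simple updown categories, again using uniqueness of factorization), so I would state it and leave the diagram chase to the reader.
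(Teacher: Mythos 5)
Your proposal is correct and takes essentially the same route as the paper: the paper likewise establishes the bijection $\Hom_{\UU}(\C,\D)\cong\Hom_{\SU}(\C,\tilde\D)$ by sending $F$ to $\tilde F$ precomposed with the natural identification $\C\cong\tilde\C$ (your unit $\eta_{\C}$) and going back by composing with the covering map $\pi:\tilde\D\to\D$ (your counit). The only difference is that you spell out the triangle identities and naturality checks, which the paper leaves implicit.
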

\begin{proof} It suffices to show that
$$
\Hom_{\UU}(\C,\D)\cong \Hom_{\SU}(\C,\tilde\D)
$$
for any simple updown category $\C$ and univalent updown category
$\D$.  A morphism $F:\C\to\D$ of univalent
updown categories gives rise to $\tilde F:\tilde\C\to\tilde\D$,
and since $\C$ is simple there is a natural identification 
$\C\cong\tilde\C$, giving us a morphism $\C\to\tilde\D$.  To
go back the other way, just compose with the covering map
$\pi:\tilde\D\to\D$.
\end{proof}
The universal cover functor $\UU\to\SU$ does not respect products:
in fact, $\tilde\C\times\tilde\D$ is generally not simple.
(This does not contradict the preceding result, because our product
is not a categorical product in $\UU$.)  We do have the following result.
\begin{prop}
\label{P:produncvr}
If $\C$ and $\D$ are univalent updown categories, then the 
number of rank-$n$ objects in $\tilde{\C\times\D}$ is
$$
\sum_{k=0}^n \binom{n}{k}|\tilde\C_k||\tilde\D_{n-k}| .
$$
\end{prop}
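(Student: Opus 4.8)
The plan is to construct an explicit bijection between $\widetilde{\C\times\D}_n$ and the disjoint union, over $0\le k\le n$, of the sets of triples $(S,\sigma,\tau)$ where $S\subseteq\{1,\dots,n\}$ has $k$ elements, $\sigma\in\tilde\C_k$, and $\tau\in\tilde\D_{n-k}$; counting the target set then yields $\sum_{k=0}^n\binom{n}{k}|\tilde\C_k|\,|\tilde\D_{n-k}|$, which is the claim. (Note that $\C\times\D$ is univalent, being a product of univalent updown categories, so $\widetilde{\C\times\D}$ is defined by Theorem \ref{T:ucover}.)

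First I would recall from the proof of Theorem \ref{T:ucover} that a rank-$n$ object of $\widetilde{\C\times\D}$ is a string $(g_1,\dots,g_n)$ in which $g_i$ is a morphism of $\C\times\D$ from a rank-$(i-1)$ object to a rank-$i$ object. The crucial structural point is that no $g_i$ can be ``diagonal'': writing the $i$-th object as $(c_i,d_i)$, the identification $\Hom((c_{i-1},d_{i-1}),(c_i,d_i))=\Hom_\C(c_{i-1},c_i)\times\Hom_\D(d_{i-1},d_i)$ together with axiom A3 forces both factors to be nonempty, hence $|c_{i-1}|\le|c_i|$ and $|d_{i-1}|\le|d_i|$; since these two rank increments sum to $1$, exactly one of them equals $1$. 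When, say, the $\C$-rank does not increase, univalence collapses $\Hom_\C(c_{i-1},c_i)$ to a single identity morphism. Thus each $g_i$ is either a ``$\C$-step'' $(f,\id)$ with $f$ a morphism between adjacent ranks of $\C$, or a ``$\D$-step'' $(\id,f')$ with $f'$ a morphism between adjacent ranks of $\D$.

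Then I would define the forward map by sending $(g_1,\dots,g_n)$ to the triple $(S,\sigma,\tau)$ in which $S$ is the set of positions that are $\C$-steps, $\sigma$ is the sequence of $\C$-components of the $\C$-steps taken in order, and $\tau$ is the sequence of $\D$-components of the $\D$-steps. The inverse rebuilds the string inductively, starting from $(\hat 0_\C,\hat 0_\D)$: at a position in $S$ one uses the next entry of $\sigma$ paired with the identity of the current $\D$-object, and dually off $S$. The step needing care — and essentially the only obstacle — is checking that $\sigma$ is a legitimate object of $\tilde\C$ (i.e. that its $j$-th morphism runs from a rank-$(j-1)$ to a rank-$j$ object of $\C$), and, symmetrically, that in the inverse construction the source of each morphism is forced to equal the current object; both facts follow from the observation that the $\C$-rank of the $i$-th object equals the number of $\C$-steps among the first $i$ positions (and likewise for $\D$). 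Granting this bookkeeping, the two maps are manifestly mutually inverse, and the stated count follows immediately; as a sanity check, when $\C$ and $\D$ are factorial this recovers $|\widetilde{\C\times\D}_n|=\sum_k\binom{n}{k}k!(n-k)!\,|\C_k||\D_{n-k}|=n!\,|(\C\times\D)_n|$, consistent with Theorem \ref{T:fprod}.
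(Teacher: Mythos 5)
Your proof is correct, but it is genuinely different from the one in the paper. The paper stays entirely within the generating-function/operator framework of \S\ref{S:ecgf}: it rewrites equation (\ref{E:coversum}) as $O_{\tilde\C}(t^2)=\<(1-tU)^{-1}\hat0,S_{\C}(t)\>$, uses the fact that $U$ on $\kk(\Ob\C)\otimes\kk(\Ob\D)$ acts as $U\otimes 1+1\otimes U$ so that $U^n(\hat0_{\C}\otimes\hat0_{\D})=\sum_k\binom{n}{k}U^k\hat0_{\C}\otimes U^{n-k}\hat0_{\D}$ by the binomial theorem, and then factors the inner product. Your argument is the combinatorial shadow of exactly that binomial expansion: you classify each step of a chain in $\tilde{\C\times\D}$ as a $\C$-step or a $\D$-step (your use of axiom A3 plus univalence to rule out ``diagonal'' steps is the right justification, and the bookkeeping that the $\C$-rank of the $i$-th object equals the number of $\C$-steps among the first $i$ positions is exactly what makes $\sigma$ a legitimate object of $\tilde\C_k$). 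What your route buys is independence from the inner-product machinery and strictly more information: it exhibits $\tilde{\C\times\D}_n$ explicitly as shuffles of chains from $\tilde\C$ and $\tilde\D$, refining the count fiberwise over $(\C\times\D)_n$, and your factorial sanity check confirms consistency with Theorem \ref{T:fprod}. What the paper's route buys is brevity given the tools already on the table, and a formulation that generalizes mechanically to any identity expressible through the derivation property of $U$ on tensor products.
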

\begin{proof}
Using the generating functions of the preceding section, equation
(\ref{E:coversum}) can be written
\[
O_{\tilde\C}(t^2)=\<(1-tU)^{-1}\hat0,S_{\C}(t)\> .
\]
Then
\begin{align*}
O_{\tilde{\C\times\D}}(t^2)=&\<(1-tU)^{-1}(\hat0_{\C}\otimes\hat0_{\D}),
S_{\C}(t)\otimes S_{\D}(t)\>\\
=&\sum_{n\ge 0} t^n\sum_{k=0}^n\binom{n}{k}\<U^k\hat0_{\C}\otimes U^{n-k}
\hat0_{\D},S_{\C}(t)\otimes S_{\D}(t)\>\\
=&\sum_{n\ge 0} t^n\sum_{k=0}^n\binom{n}{k}\<U^k\hat0_{\C},S_{\C}(t)\>
\<U^{n-k}\hat0_{\D},S_{\D}(t)\>\\
=&\sum_{n\ge 0} t^n\sum_{k=0}^n\binom{n}{k}t^k|\tilde\C_k|
t^{n-k}|\tilde\D_{n-k}|,
\end{align*}
from which the conclusion follows.
\end{proof}
\section{Examples}
\label{S:ex}
\par
In this section we present ten examples of updown categories.  
Many of the associated weighted-relation posets appear in the last
section of \cite{H1}.  For the convenience of the reader we
have included a cross-reference to \cite{H1} at the beginning
of each example where it applies.
\par
\begin{demo}
\label{Ex:subset}\rm
(Subsets of a finite set; \cite[Ex. 1]{H1}, \cite[Ex. 2.5(b)]{S2},
\cite[Ex. 6.2.6]{E}.)
First, let $\A$ be an updown category such that $\A_0=\{\hat 0\}$,
$\A_1=\{\hat 1\}$, $\A_n=\emptyset$
for $n\ne 0,1$, and $\Hom(\hat 0,\hat 1)$ has a single element.
The groups $\Aut(\hat 0)$ and $\Aut(\hat 1)$ are trivial since
they act freely on the one-element set $\Hom(\hat 0,\hat 1)$.
The object and morphism generating functions are evidently
$$
O_{\A}(t)=1+t\quad\text{and}\quad M_{\A}(t)=t .
$$
Evidently $\A$ is simple and factorial.
\par 
Now let $\B=\A^n$.  Since $\A$ is factorial, $\B$ is factorial by 
Theorem \ref{T:fprod}.
Objects of $\B$ can be identified with subsets of $\{1,2,\dots,n\}$:  
an $n$-tuple $(c_1,\dots,c_n)$ corresponds to the set $\{i: c_i=\hat 1\}$.  
The induced partial order is inclusion of sets, and in fact $\B$
is unital (but not simple for $n\ge 2$).
In \cite{H1} it is shown that the universal cover $\tilde\B$ is the 
simple updown category whose rank-$m$ elements are linearly ordered 
$m$-element subsets of $\{1,\dots,n\}$, and whose morphisms are
inclusions of initial segments.  This makes it obvious that 
$|\pi^{-1}(b)|=m!$ for all $b\in\B_m$, 
which also follows from equation (\ref{E:fiber}).
\par
From the remark following Corollary \ref{C:gfprod}, the generating 
functions are
\[
O_{\B}(t)=(1+t)^n\quad\text{and}\quad M_{\B}(t)=nt(1+t^2)^{n-1} .
\]
\end{demo}
\begin{demo}
(Monomials; \cite[Ex. 2]{H1}, \cite[Ex. 2.2.1]{F2}.)
\label{Ex:finset}
Let $\SS$ be the category with $\SS_n=\{[n]\}$, where $[n]=\{1,2,\dots,n\}$ 
(and $[0]=\emptyset$), and let $\Hom([m],[n])$ be the set of injective 
functions from $[m]$ to $[n]$.  Then the axioms are easily seen to hold, 
with $\Aut[n]=\Si_n$, the symmetric group on $n$ letters.
Since $\Hom([n],[n+1])$ has $(n+1)!$ elements, we have $u([n];[n+1])=1$ 
and $d([n];[n+1])=n+1$ (so $\SS$ is factorial).
The generating functions are
\begin{equation}
\label{E:ssgf}
O_{\SS}(t)=\sum_{n\ge 0}\frac{t^n}{n!}=e^t
\quad\text{and}\quad
M_{\SS}(t)=\sum_{n\ge 0}\frac{t^{2n+1}}{n!}=te^{t^2}.
\end{equation}
\par
Now let $\M=\SS^n$.
Objects of $\M$ can be identified with monomials in $n$ commuting
indeterminates $t_1,\dots,t_n$.  The automorphism group of 
$t_1^{i_1}t_2^{i_2}\cdots t_n^{i_n}$ is $\Si_{i_1}\times\Si_{i_2}
\times\dots\times\Si_{i_n}$, and a monomial $u$ precedes a monomial
$v$ in the induced partial order if $u$ is a factor of $v$.
By Theorem \ref{T:fprod} $\M$ is factorial, so
\[
d(1;t_1^{i_1}\cdots t_n^{i_n})=(i_1+\dots+i_n)! 
\]
by equation (\ref{E:dfact}).
Hence by equation (\ref{E:ratio})
$$
u(1;t_1^{i_1}\cdots t_n^{i_n})=\frac{(i_1+\dots+i_n)!}{i_1!\cdots i_n!} .
$$
Then it follows (using equation (\ref{E:coversum})) that
\[
|\tilde{\M^\up}_m|=\sum_{i_1+\dots+i_n=m}\binom{m}{i_1\ i_2\ \cdots i_n}=n^m
\]
and
\[
|\tilde{\M^\dn}_m|=\sum_{i_1+\dots+i_n=m}m!=n(n+1)\cdots (n+m-1).
\]
\par
The weighted-relation poset $Wrp(\M^\up)$ appears in \cite{H1},
where it is shown that the universal cover $\tilde{\M^\up}$ can be
identified with the simple updown category whose objects are
monomials in $n$ noncommuting indeterminates $T_1,\dots,T_n$, and
whose morphisms are inclusions as left factors; the covering
map $\pi:\tilde{\M^\up}\to\M^\up$ sends $T_i$ to $t_i$ (e.g.,
$\pi^{-1}(t_1^2t_2)=\{T_1^2T_2,T_1T_2T_1,T_2T_1^2\}$).  
\par
A similar description of $\tilde{\M^\dn}$ can be obtained by
reworking the construction of Theorem \ref{T:ucover} as follows.
Objects in $\tilde{\M^\dn}$ are those monomials in
the noncommuting indeterminates $\{T_{ij}: 1\le i\le n, j\ge 1\}$
such that (a) no indeterminate is repeated; and (b) if $T_{ij}$
occurs, then so does $T_{ik}$ for $k<j$.  The covering map
$\pi:\tilde{\M^\dn}\to\M^\dn$ sends $T_{ij}$ to $t_i$ (e.g.,
$\pi^{-1}(t_1^2t_2)=\{T_{11}T_{12}T_{21}, T_{12}T_{11}T_{21},
T_{11}T_{21}T_{12}, T_{12}T_{21}T_{11}, T_{21}T_{11}T_{12}, 
T_{21}T_{12}T_{11}\}$).  
For any object $w$ of $\tilde{\M^\dn}$, there are $n$ permutations $\si_1,
\si_2\dots,\si_n$ that can be extracted from the second subscripts:
e.g., for $T_{13}T_{21}T_{11}T_{12}$ the permutations are $\si_1=312$
and $\si_2=1$.  The partial order on objects of $\tilde{\M^\dn}$
is given by having the monomial $wT_{ij}$ cover $w'$ if $w'$ has
the same sequence of first subscripts as $w$, and $wT_{ij}$ has
the same associated permutations as $w'$ except that $\si_i$ for $wT_{ij}$
covers $\si_i$ for $w'$ in the sense of the preceding example.  For
example, $T_{13}T_{21}T_{11}T_{12}$ generates the order ideal
$
T_{13}T_{21}T_{11}T_{12}\rhd T_{12}T_{21}T_{11}\rhd T_{11}T_{21}\rhd
T_{11}\rhd 1 .
$
\par
By equations (\ref{E:ssgf}) and the remark following Corollary
\ref{C:gfprod}, the generating functions are
\[
O_{\M}(t)=e^{nt}\quad\text{and}\quad M_{\M}(t)=nte^{nt^2} .
\]
\end{demo}
\begin{demo}\rm
\label{Ex:fingraph}
Let $\G$ be the category whose objects are isomorphism classes of finite
graphs.
Then $\G$ is graded by the number of vertices, with $\hat 0$ the
empty graph.  A morphism from $H$ to $G$ is an injective function
$f:v(H)\to v(G)$ on the vertex sets such that $f(v_1)$ and $f(v_2)$ are
connected in $G$ if and only if $v_1$ and $v_2$ are connected in
$H$.  If $G\rhd H$, then there is a vertex $v$ of $G$ so that $G-\{v\}$
is isomorphic to $H$.  
Evidently $\G$ is factorial, since for any $G\in\G_n$
\[
\sum_{|H|=n-1} d(H;G) = n .
\]
But $\G$ is also uniformly up-covered, any $G$ covering $H\in\G_n$ 
can be obtained from $H$ by adjoining a new vertex and edges between
that vertex and some subset of the $n$ vertices of $\H$:  thus
\[
\sum_{|G|=n+1} u(H;G) = 2^n .
\]
It follows from Corollary \ref{C:eucedc} that
\[
O_{\G}(t)=\sum_{n\ge 0} \frac{2^{\binom{n}{2}}}{n!}t^n ,
\]
and thus from Theorem \ref{T:gfsum} that
\[
M_{\G}(t)=\sum_{n\ge 1} \frac{2^{\binom{n}{2}}}{(n-1)!}t^{2n-1} .
\]
\par
Objects of the universal cover $\tilde\G^{\up}$ can be identified
with graphs whose vertices are labelled by the positive integers;
morphisms of $\G^{\up}$ preserve labels.  
From equation (\ref{E:covereuc}) follows $|\tilde\G_n^\up|=2^{\binom{n}{2}}$.
On the other hand, an element
$(\emptyset,G_1,G_2,\dots,G_n)$ of $\tilde\G_n^{\dn}$ 
can be specified by giving a bijection
$$
f:\{1,2,\dots,n\}\to v(G_n)
$$
such that each $G_i$ is the full subgraph of $G_n$ on the vertices 
$\{f(1),\dots,f(i)\}$.  This makes it evident that $|\pi^{-1}(G_n)|=n!$,
in accordance with the remark following Theorem \ref{T:ucover}.
\end{demo}
\begin{demo}\rm
\label{Ex:neck}
(Necklaces; \cite[Ex. 3]{H1}.)
For a fixed positive integer $c$, let $\Ne_m$ be the set of 
$m$-bead necklaces with beads of $c$ possible colors.  More
precisely, a rank-$m$ object of $\Ne$ is an equivalence
class of functions $f:\Zm\to [c]$, where $f$ is equivalent
to $g$ if there is some $n$ so that $f(a+n)=g(a)$ for all
$a\in\Zm$.  Thus, for $c=2$ the equivalence class 
$$
\{(1,1,2,2),(2,1,1,2),(2,2,1,1),(1,2,2,1)\}\quad
\text{represents the necklace}
\hskip .3in
\psdots[dotstyle=*](.25,.15)(0,.4)
\psdots[dotstyle=o](-.25,.15)(0,-.1)
\psarc(0,.15){.25}{10}{80}
\psarc(0,.15){.25}{100}{170}
\psarc(0,.15){.25}{190}{260}
\psarc(0,.15){.25}{280}{350}
\hskip .2in .
$$
\vskip .1in
\par\noindent
A morphism from the equivalence class of $f$ in $\Ne_m$ to 
the equivalence class of $g$ in $\Ne_n$ is an injective function 
$h:\Zm\to\Zn$ with $f(a)=gh(a)$ for all $a\in\Zm$, and such that 
$h$ preserves the cyclic order, i.e., if we pick representatives of 
the $h(i)$ in $\mathbf Z$ with $0\le h(i)\le n-1$, then some cyclic 
permutation of $(h(0),h(1),\dots,h(m-1))$ is an increasing sequence.
Informally, $u(p;q)$ is the number of ways to insert a bead
into necklace $p$ to get necklace $q$, and $d(p;q)$ is the
number of different beads of $q$ that can be deleted to give $p$.
\par
Note that $\Ne$ is factorial (there are $m$ different beads that 
can be removed from $p\in\Ne_m$) and also evenly up-covered with
$u_m=mc$ for $m\ge 1$ (in a necklace with $m\ge 1$ beads there
are $m$ places that a bead of $c$ possible colors can be inserted);
of course $u_0=c$.
Thus, by Corollary \ref{C:eucedc}
$$
a_n=\begin{cases} 1,&\text{if $n=0$;}\\
\frac{c^n}{n},&\text{if $n\ge 1$;}\end{cases}
$$
and so $O_{\Ne}(t)=1-\log(1-ct)$.
Again using the fact that $\Ne$ is factorial 
(and Theorem \ref{T:gfsum}), we have
\[
M_{\Ne}(t)= \frac{ct}{1-ct^2} .
\]
\par
We have $|\tilde\Ne_m^\up|=(m-1)!c^m$ by equation (\ref{E:covereuc}):  cf.
the discussion in \cite{H1}, where the same result is obtained by identifying
elements of rank $\tilde\Ne_m^\up$ with necklaces of $m$ beads in $c$ colors 
in which the beads are labelled by $1,2\dots,m$.
On the other hand, an element of $\tilde\Ne_m^\dn$
can be regarded as an equivalence class of pairs $(f,\si)$, 
where $f:\Zm\to [c]$ and $\si$ is a permutation of $\{0,1,\dots,m-1\}$.
The equivalence relation is that $(f,\si)\sim (g,\tau)$ if $f\ne g$ and
there is some $0\le n\le m-1$ with $g(x)=f(x+n)$ and $\tau(x)=\si(x+n)$ for 
all $x\in\Zm$.  
Evidently there are $m!$ such equivalence classes for a
given $[f]\in\Ne_m$, in accord with the factoriality of $\Ne$.
\end{demo}
\begin{demo}\rm
\label{Ex:upart}
(Integer partitions with unit weights; \cite[Ex. 5]{H1}, \cite{S1},
\cite[Ex. 1.6.8]{F2}.)
Let $\Y$ be the category with $\Ob\Y$ the set of integer partitions,
i.e., finite sequences $(\la_1,\la_2,\dots,\la_k)$ of positive integers 
with $\la_1\ge\la_2\ge\dots\ge\la_k$.
The rank of a partition is $|\la|=\la_1+\la_2+\dots+\la_k$; we write
$\ell(\la)$ for the length (number of parts) of $\la$.  The set of morphisms
$\Hom(\la,\mu)$ contains a single element if and only if $\la_i\le\mu_i$
for all $i$.
Then $\Y$ is evidently unital but not simple.
\par
Since $\Y_n$ is the set of partitions of $n$, the object generating function
$$
O_{\Y}(t)=\sum_{n\ge 0}|\Y_n|t^n=\frac1{(1-t)(1-t^2)(1-t^3)\cdots}
$$
is familiar.  The morphism generating function is
$$
M_{\Y}(t)=\sum_{n\ge 0}|\{(\la,\mu):\: \la\in\Y_n,\: \la\lhd\mu\}| t^{2n+1} 
$$
since $\Y$ is unital.  Using the case $k=1$ of \cite[Theorem 3.2]{S1},
it follows that
$$
M_{\Y}(t)=\frac{t}{1-t^2}O_{\Y}(t^2)=
\frac{t}{(1-t^2)^2(1-t^4)(1-t^6)\cdots} .
$$
\par
In \cite{H1} it is shown that the universal cover $\tilde\Y$ is the 
poset of standard Young tableaux, so $u(\hat 0;\la)=d(\hat 0;\la)$ is
the number of standard Young tableaux of shape $\la$.
\end{demo}
\begin{demo}\rm
\label{Ex:kpart}
Let $\K$ be the category with $\Ob\K$ the set of
integer partitions, and $\Hom(\la,\mu)$ defined as follows.
Let $\la=(\la_1,\dots,\la_n)$ and $\mu=(\mu_1,\dots,\mu_m)$,
always written in decreasing order.
Then a morphism from $\la$ to $\mu$ is an injective
function $f:[n]\to[m]$ such that $\la_i\le \mu_j$ whenever $f(i)=j$.
\par
The partial order induced on $\Ob\K=\Ob\Y$ is the same as that of the
preceding example:  the difference is that we now have nontrivial
automorphism groups and weights on covering relations.
The automorphism group of $\la=(\la_1,\dots,\la_k)$ is the
subgroup of $\Si_k$ consisting of those permutations $\si$
such that $\la_i=\la_j$ whenever $\si(i)=j$.  If we let $m_i(\la)$ 
be the number of parts of $\la$ of size $i$, this means that
$$
|\Aut(\la)|=m_1(\la)!m_2(\la)!\cdots .
$$
For partitions $\la,\mu$ with $|\mu|=|\la|+1$, $\Hom(\la,\mu)$
is nonempty exactly when (i) $\mu$ comes from $\la$ by adding a
part of size 1; or (ii) $\mu$ comes by replacing a size-$k$ part
of $\la$ by a part of size $k+1$.  In case (i) we put $u(\la;\mu)=1$
and $d(\la;\mu)=m_1(\mu)$, while in case (ii) we put $u(\la;\mu)=
m_k(\la)$ and $d(\la;\mu)=m_{k+1}(\mu)$.
The weights $d(\la;\mu)$ appear implicitly in \cite{Ki} and
explicitly in \cite{K}, where they are referred to as 
``Kingman's branching'':  see especially Figure 4 of \cite{K}.
\par
The object generating function can be computed as follows:
\begin{multline*}
O_{\K}(t)=\sum_{\la\in\Ob\K}\frac{t^{|\la|}}{|\Aut(\la)|}
=\sum_{m_1,m_2,\dots \ge 0}\frac{t^{m_1+2m_2+\dots}}{m_1!m_2!\cdots}=\\
\left(\sum_{m_1\ge 0}\frac{t^{m_1}}{m_1!}\right)
\left(\sum_{m_2\ge 0}\frac{t^{2m_2}}{m_2!}\right)\cdots
=\exp(t+t^2+\cdots)=\exp\left(\frac{t}{1-t}\right) .
\end{multline*}
To find the morphism generating function
\begin{equation}
\label{E:morkpart}
M_{\K}(t)=\sum_{\la\in\Ob\K}\frac{t^{2|\la|+1}}{|\Aut(\la)|}
\sum_{\la\lhd\mu}u(\la;\mu) 
\end{equation}
we first observe that
$$
\sum_{\la\lhd\mu}u(\la;\mu) = 1+\ell(\la)=1+m_1(\la)+m_2(\la)+\cdots ,
$$
since
(using the description of $u(\la;\mu)$ above) this is the number of ways
to obtain a partition covering $\la$:  we can increase by one any of the 
$\ell(\la)$ parts of $\la$, or add a new part of size 1.  Thus equation
(\ref{E:morkpart}) is
\begin{multline*}
M_{\K}(t)=\sum_{m_1,m_2,\dots\ge 0}\frac{t^{1+2m_1+4m_2+\cdots}}
{m_1!m_2!\cdots}(1+m_1+m_2+\cdots)\\
=(t+t^3+t^5+\cdots)O_{\K}(t^2)
=\frac{t}{1-t^2}\exp\left(\frac{t^2}{1-t^2}\right) .
\end{multline*}
\par
The universal cover $\tilde\K^\up$ can be described in terms
of set partitions:  elements of $\tilde\K_n^\up$ are partitions
of the set $[n]$, with $\pi:\tilde\K^\up\to\K^\up$ sending each partition
to the integer partition of $n$ given by its block sizes.
Thus $|\tilde\K_n^\up|$ is the $n$th Bell number \cite[A000110]{S}.
We can identify set partitions with the construction of Theorem
\ref{T:ucover} as follows.
For convenience we write a set partition as
$(P_1,\dots,P_k)$ with $|P_1|\ge |P_2|\ge\dots\ge|P_k|$
and, if $|P_i|=|P_j|$ for $i<j$, then $\max P_i<\max P_j$.
Assign the unique partition of $[1]$ to the morphism from $\hat 0$ to $(1)$,
and suppose inductively that we have assigned an ordered partition
$P=(P_1,\dots,P_k)$ of $[n]$ to the chain $(h_1,\dots,h_n)$ of morphisms 
between adjacent ranks of $\K^\up$ from $\hat 0$ to 
$\trg(h_n)=(\la_1,\dots,\la_k)\in\Ob\K_n^\up$ so that $\la_i=|P_i|$.
Let $f\in\Hom_{\K}(\la,\mu)$ be a representative of the equivalence
class $h_{n+1}\in\Hom_{\K^\up}(\la,\mu)$, where $|\mu|=n+1$.
If $\mu$ has length $k+1$, assign $(P_1,\dots,P_k,\{n+1\})$ to the chain
$(h_1,\dots,h_n,h_{n+1})$.  Otherwise, $\mu$ has length $k$ and there is
a unique $i\in [k]$ such that $\la_i<\mu_{f(i)}$:  in this case,
assign to $(h_1,\dots,h_{n+1})$ the rearrangement of $(P_1',\dots,P_k')$, 
where
$$
P_j'=\begin{cases} P_j\cup\{n+1\},&\text{if $j=i$},\\
P_j,&\text{otherwise,}
\end{cases}
$$
so that $P_i'$ immediately follows $P_m'$, where 
$m=\max\{j<i:|P_j'|\ge |P_i'|\}$.
Evidently the set partition assigned to $(h_1,\dots,h_{n+1})$ projects
to $\mu$ in either case.
\par
Rank-$n$ objects of the universal cover $\tilde\K^\dn$ can be described
as sequences $s=(a_1,\dots,a_n)$ such that $m_1(s)\ge m_2(s)\ge\cdots$,
where $m_i(s)$ is the number of occurrences of $i$ in $s$; the covering
map sends $s$ to $(m_1(s),m_2(s),\dots)$.
See \cite[A005651]{S}.
As in the preceding paragraph,
we can proceed inductively to identify these objects with the 
construction of Theorem \ref{T:ucover}.
Start by assigning $s=(1)$ to the morphism from $\hat 0$ to $(1)$.
Suppose now we have assigned $s=(a_1,\dots,a_n)$ to a chain of morphisms
$(h_1,\dots,h_n)$ between adjacent ranks of $\K^\dn$ from $\hat 0$ to 
$\la=(\la_1,\dots,\la_k)\in\K_n^\dn$ so that $m_i(s)=\la_i$ for 
$1\le i\le k$, and let $h_{n+1}\in\Hom_{\K^\dn}(\la,\mu)$ where $|\mu|=n+1$.  
Now a representative  $f\in\Hom_{\K}(\la,\mu)$ of $h_{n+1}$ must
be ``almost an automorphism'' exchanging parts of equal size with
just one exception:  there is a unique $i\in [\ell(\mu)]$ such
that either $i$ is not in the image of $f$ (in which case $\mu_i=1$),
or else $\la_{f^{-1}(i)}<\mu_i$ (in which case $\mu_i=\la_{f^{-1}(i)}+1$).
Let $S=\{j > i: \la_{f^{-1}(j)}=\mu_i\}$:  note that $S$ is
independent of the choice of $f$.  Now define a permutation $\si$
of $[\ell(\mu)]$ as follows.  If $S=\emptyset$, let $\si$ be the
identity; otherwise, if $S=\{i+1,\dots,l\}$, let $\si(a)=a+1$ for
$i\le a\le l-1$, $\si(l)=i$, and $\si(a)=a$ for $a\notin\{i,\dots,l\}$.
We then assign the sequence $s'=(\si(a_1),\dots,\si(a_k),i)$ to
the chain $(h_1,\dots,h_n,h_{n+1})$.  If $i\notin\im f$, then
$\mu_j=1$ for $j\ge i$ and either $i=\ell(\mu)=k+1$ (if $S$ is empty)
or $l=\ell(\mu)=k+1$ (if it isn't):  either way $\mu$ differs by $\la$
by having 1 inserted in the $i$th position, and $s'$ projects to $\mu$.
If $\mu_i=\la_{f^{-1}(i)}+1$, then we must have $\la_j=\mu_j$ for
$j<i$, and $\mu$ differs from $\la$ in having a part of size $\mu_i-1$
increased by 1.  If $S$ is empty, $\la_{f^{-1}(i)}=\la_i$ and 
$\mu_i=m_i(s')=m_i(s)+1=\la_i+1$.  
Otherwise, $\mu_i=m_i(s')=m_l(s)+1=\la_{f^{-1}(l)}$
and $m_{j+1}(s')=m_j(s)$ for $i\le j\le l-1$.  Either way, $s'$
again projects to $\mu$.
\end{demo}
\begin{demo}\rm
(Integer compositions; \cite[Ex. 6]{H1}.)
\label{Ex:comp}
Let $\C_n$ be the set of integer compositions of $n$, i.e. sequences 
$I=(i_1,\dots,i_p)$ of positive integers with $a_1+\dots+a_m=n$;
as with partitions we write $\ell(I)$ for the length of $I$.
A morphism from $(i_1,\dots,i_p)\in\C_n$ to 
$(j_1,\dots,j_q)\in\C_m$ is an order-preserving injective
function $f:[p]\to[q]$ such that $i_a\le j_{f(a)}$ for all
$a\in [p]$.  Then $\C$ is a univalent updown category (but
not simple).  
\par
The object generating function is
$$
O_{\C}(t)=\sum_{n\ge 0}|\C_n|t^n=1+\sum_{n\ge 1}2^{n-1}t^n=\frac{1-t}{1-2t} .
$$
Now for any composition $I$, 
$$
\sum_{I\lhd J} u(I;J)=\ell(I)+\ell(I)+1=2\ell(I)+1
$$
since we can get a composition covering $I$ either by increasing each
of its $\ell(I)$ parts, or by inserting a part of size 1 into one
of $\ell(I)+1$ possible positions.
Thus, the morphism generating function is
$$
M_{\C}(t)=\sum_{n\ge 0}t^{2n+1}\sum_{k=1}^n|\C_{n,k}|(2k+1) ,
$$
where $\C_{n,k}$ is the set of compositions of $n$ with $k$ parts.
Evidently $|\C_{n,k}|=\binom{n-1}{k-1}$, so
$$
M_{\C}(t)=\sum_{n\ge 0}t^{2n+1}\sum_{k=1}^n\binom{n-1}{k-1}(2k+1)=
\sum_{n\ge 0}(n+2)2^{n-1}t^{2n+1}=\frac{t-t^3}{(1-2t^2)^2} .
$$
\par
The universal cover $\tilde\C$ is constructed in \cite{H1} using 
Cayley permutations as defined in \cite{MF}:  
a Cayley permutation of rank $n$ is a length-$n$ sequence 
$s=(a_1,\dots,a_n)$ of positive integers such that any positive 
integer $i<j$ appears in $s$ whenever $j$ does.  
See \cite[A00679]{S}.
The covering map $\pi:\tilde\C\to\C$ sends a sequence $s$ to the 
composition $(m_1(s),m_2(s),\dots)$.
To relate this to the construction of Theorem \ref{T:ucover},
we again proceed inductively.  Send the morphism from $\hat 0$
to $(1)$ to the Cayley permutation $(1)$, and suppose 
we have assigned to a chain $(h_1,h_2,\dots,h_n)$ of morphisms
between consecutive ranks of $\C$ from $\hat 0$ to $I=(i_1,\dots,i_k)
\in\C_n$ a Cayley permutation $s=(a_1,\dots,a_n)$ that projects to $I$:
note that $\max\{a_1,\dots,a_n\}=k$.
Now let $h_{n+1}\in\Hom(I,J)$ with $J\in\C_{n+1}$.
Then either $\ell(J)=k$ and $h_{n+1}$ is the identity function on $[k]$, 
or $\ell(J)=k+1$.
In the first case, there is exactly one position $q$ where $J$
differs from $I$:  assign to $(h_1,\dots,h_{n+1})$ the Cayley
permutation $s'=(a_1,\dots,a_n,q)$.  Then $m_q(s')=m_q(s)+1=i_q+1$
and $m_i(s')=m_i(s)$ for $i\ne q$, so $s'$ projects to $J$.
In the second case, there is exactly one element $q\in[k+1]$ that 
$h_{n+1}$ misses:  assign $s'=(h_{n+1}(a_1),\dots,h_{n+1}(a_n),q)$ 
to $(h_1,\dots,h_{n+1})$.  Then $\pi(s')=(m_1(s'),m_2(s'),\dots)$ differs
from $I$ only in having an additional 1 inserted in the $q$th place,
and so must be $J$.
\end{demo}
\begin{demo}\rm
(Planar rooted trees; \cite[Ex. 4]{H1}.)
\label{Ex:plrtree}
Let $\P_n$ consist of functions $f:[2n]\to\{-1,1\}$ so that the 
partial sums $S_i=f(1)+\cdots+f(i)$ have the properties that $S_i\ge 0$
for all $1\le i\le 2n$, and $S_{2n}=0$.  We declare $\Aut(f)$ to
be trivial for all objects $f$ of $\P$, and define a morphism from
$f\in\P_n$ to $g\in\P_{n+1}$ to be an injective, order-preserving function 
$h:[2n]\to[2n+2]$ such that the two values of $[2n+2]$ not in the
image of $h$ are consecutive, and $f(i)=gh(i)$ for $1\le i\le 2n$.
Then $\P$ is a univalent updown category.  Using the well-known
identification of balanced bracket arrangements with planar
rooted trees, e.g. 
\begin{equation*}
(1,1,-1,1,1,-1,-1,-1)\quad\text{is identified with}\quad
\psline{*-*}(.25,0)(.5,.5)
\psline{*-*}(.5,.5)(.75,0)
\psline{*-*}(.75,0)(.75,-.5)
\hskip .4in ,
\end{equation*}
\vskip .2in
\par\noindent
we can think of $\P$ as the updown category of planar rooted trees;
the rank is the count of non-root vertices.  (The empty bracket
arrangement $\emptyset$ is identified with the tree $\bullet$ consisting 
of the root vertex.)
In view of the well-known enumeration of planar rooted trees by
Catalan numbers, the object generating function is simply
$$
O_{\P}(t)=\sum_{n\ge 0}|\P_n|t^n=\sum_{n\ge 0}\frac1{n+1}\binom{2n}{n}t^n=
\frac{1-\sqrt{1-4t}}{2t} .
$$
Since there are $2n+1$ possibilities for order-preserving injections
$[2n]\to[2n+2]$ that miss two consecutive values, $\P$ is evenly
up-covered with $u_n=2n+1$ and by Theorem \ref{T:gfsum}
the morphism generating function is
$$
M_{\P}(t)=\sum_{n\ge 0}\frac{2n+1}{n+1}\binom{2n}{n}t^{2n+1}=
\sum_{n\ge 0}\binom{2n+1}{n+1}t^{2n+1}=
\frac{1-\sqrt{1-4t^2}}{2t\sqrt{1-4t^2}} .
$$
\par
From equation (\ref{E:covereuc}) we have $|\tilde\P_n|=(2n-1)!!$ for
$n\ge 1$.
In \cite{H1} the universal cover of $Wrp(\P)$ is described as the 
weighted-relation poset whose rank-$n$ elements are permutations 
$(a_1,a_2,\dots,a_{2n})$ of the multiset $\{1,1,2,2,\dots,n,n\}$ such that, 
if $a_i>a_j$ with $i<j$, then there is some $k<j$, $k\ne i$, such that
$a_k=a_i$. (The covering map sends a sequence $s=(a_1,\dots,a_{2n})$
to a sequence of 1's and $-1$'s by sending the first occurrence
of $i$ in $s$ to 1 and the second to $-1$.)  
This construction can be identified with $\tilde\P$ as described in 
Theorem \ref{T:ucover} in an obvious way.
For example, the morphism from $\emptyset$
to $(1,1,-1,1,-1,-1)$ given by the composition $h_3h_2h_1$,
where $h_1=\emptyset$, $h_2=\{(1,1),(2,4)\}$ and 
$h_3=\{(1,1),(2,2),(3,3),(4,6)\}$, can be coded by the sequence 
$(1,2,2,3,3,1)$.
\end{demo}
\begin{demo}\rm
(Rooted trees; \cite[Ex. 7]{H1}.)
\label{Ex:rtree}
Let $\T_n$ consist of partially ordered sets $P$ such that
(i) $P$ has $n+1$ elements; (ii) $P$ has a greatest element; and
(iii) for any $v\in P$, the set of elements of $P$ exceeding $v$
forms a chain.
The Hasse diagram of such a poset $P$ is a tree with the greatest 
element (the root vertex) at the top.
A morphism of $\T$ from $P\in\T_m$ to $Q\in\T_n$ is an injective
order-preserving function $f:P\to Q$ that sends the root of $P$ to the
root of $Q$, and which preserves covering relations (i.e., if
$v\lhd w$ in the partial order on $P$, then $f(v)\lhd f(w)$ in
the partial order on $Q$).  Then $\T$ is an updown category.
\par
The updown category $\T$ was studied extensively in \cite{H2},
though without using the categorical language.  To see that the
construction of the preceding paragraph gives the same multiplicities
as in \cite{H2}, consider a morphism from $P\in\T_n$ to $Q\in\T_{n+1}$.  
Any such morphism misses only some terminal vertex $v\in Q$, so we
can think of it as identifying $P$ with $Q-\{v\}$.  Elements of
$$
\Hom(P,Q)/\Aut(Q)
$$
amount to different choices for the parent of $v$ in $Q$, i.e.,
different choices for terminal vertices of $P$ to which a new
edge and vertex can be attached to form $Q$:  this is $n(P;Q)$
as defined in \cite{H2}.  On the other hand, elements of 
$$
\Hom(P,Q)/\Aut(P)
$$
amount to different choices of $v$, and thus to different
choices for an edge of $Q$ that when cut leaves $P$:  this is
$m(P;Q)$ as defined in \cite{H2}.  
\par
The object generating function
$$
O_{\T}(t)=\sum_{n\ge 0}t^n\sum_{P\in\T_n}\frac1{|\Aut(P)|}
$$
can be evaluated using a result of \cite{B}.  First, we note from
\cite{BK} (cf. the discussion in \cite{H1}) that 
$$
u(\bullet;P)=n(\bullet;P)=\frac{(|P|+1)!}{P!|\Aut(P)|} ,
$$
where $P!$ is the ``tree factorial,'' i.e., the product
$$
\prod_{v\ \text{is a vertex of $P$}}(|P_v|+1)!
$$
where $P_v$ is the subtree of $P$ having $v$ as its root.  Thus
$$
O_{\T}(t)=\sum_{n\ge 0}t^n\sum_{P\in\T_n}\frac{u(\bullet;P)P!}{(n+1)!} .
$$
From \S5.3 of \cite{B} we have
$$
\sum_{P\in\T_n}u(\bullet;P)P!=(n+1)^n ,
$$
so
$$
O_{\T}(t)=\sum_{n\ge 0}\frac{(n+1)^n}{(n+1)!}t^n .
$$
(We note that $tO_{\T}(t)$ is the functional inverse of $te^{-t}$:
see \cite[\S5.3]{S3}.)
Now $P\in\T_n$ has a total of $n+1$ vertices to which new
edges can be added, so $\T$ is evenly up-covered with $u_n=n+1$
and by Theorem \ref{T:gfsum} the morphism generating function is
$$
M_{\T}(t)=\sum_{n\ge 0} \frac{(n+1)^n}{n!}t^{2n+1} .
$$
\par
In \cite{H1} the weighted-relation poset $Wrp(\T^\up)$ is discussed,
and it is shown that rank-$n$ objects of the universal cover $\tilde\T^\up$ 
can be described as permutations of $[n]$.  (The partial order on 
permutations in $\tilde\T^\up$ is as follows:  a permutation $\tau$ of $[n+1]$ 
covers the permutation $\tau\iota_{\tau^{-1}(n+1)}^n$ of $[n]$,
where $\iota_m^n$ is the order-preserving injection from $[n]$ to $[n+1]$
that misses $m$.)
On the other hand, objects of $\tilde\T_n^\dn$ can be thought of as 
pairs $(P,f)$, where $P\in\T_n$ and 
$$
f:\{0,1,2,\dots,n\}\to P
$$
is a bijection such that $f(i)$ exceeds $f(j)$ (in the partial 
order on $P$) whenever $i>j$. (Cf. the remark following \cite[Prop. 2.5]{H2}.)
\end{demo}
\begin{demo}\rm
(Binary rooted trees) 
\label{Ex:brtree}
Let $\B_n$ be the set of rooted binary trees
with $n+1$ terminal vertices (leaves).  That is, an element of $\B_n$
is a rooted tree in which each vertex has two daughters or none (in
which case it is a leaf).
Any $P\in\B_n$ defines a metric on its set $L(P)$ of leaves:  the distance
$\de(p,q)$ from leaf $p$ to leaf $q$ is the number of non-terminal vertices 
contained in the unique shortest path from $p$ to $q$.
An automorphism of $P\in\B_n$ is a bijection $f$ on $L(P)$
such that $\de(f(p),f(q))=\de(p,q)$ for all $p,q$.
A morphism from $P\in\B_n$ to $Q\in\B_{n+1}$ 
is an injection $f:L(P)\to L(Q)$ such that
$\de(f(p_1),f(p_2))\ge \de(p_1,p_2)$ for all $p_1,p_2\in L(P)$, and 
the only $r\in L(Q)$ with $r\notin\im f$ is distance 1 from some 
$s\in\im f$.  For example, if
\[
P=
\psline{*-*}(.25,0)(.5,.5)
\psline{*-*}(.5,.5)(.75,0)
\psline{*-*}(.75,0)(.5,-.5)
\psline{*-*}(.75,0)(1,-.5)
\hskip .5in
\text{and}
\hskip .3in
Q=
\psline{*-*}(.2,0)(.5,.5)
\psline{*-*}(.5,.5)(.8,0)
\psline{*-*}(.8,0)(.6,-.5)
\psline{*-*}(.8,0)(1,-.5)
\psline{*-*}(.2,0)(0,-.5)
\psline{*-*}(.2,0)(.4,-.5)
\]
\vskip .2in
\par\noindent
then $|\Aut(P)|=2$, $|\Aut(Q)|=8$, and $|\Hom(P,Q)|=8$:  hence
$u(P;Q)=1$ and $d(P;Q)=4$.  (If we call a pair of leaves distance
1 apart together with their common parent a ``bud'', then
$u(P;Q)$ is the number of leaves of $P$ that can be replaced by a
bud to get $Q$, and $d(P;Q)$ is twice the number of buds of $Q$
that can be replaced by a leaf to get $P$.)
\par
Let $R\in\B_n$, and let $T$ be a particular realization of $R$ in
the plane, i.e., a planar binary rooted tree.  Since $T$ has $n$ 
non-terminal vertices, the group $G=\Z_2^n$  
(where $\Z_2$ is the group of order 2) acts on $T$ by rotations
around each such vertex:  the isotropy group of $T$ is $\Aut R$.  Then 
the number of distinct planar binary rooted trees $T$ that 
can represent $R$ is
\[
\left| G/\Aut R\right|=\frac{2^n}{|\Aut R|} .
\]
Now there are $C_n$ distinct planar binary rooted trees with $n$
non-terminal vertices, where $C_n$ is the $n$th Catalan number, so
\[
\sum_{P\in\B_n}\frac{2^n}{|\Aut R|} = C_n
\]
and the object generating function is
\[
O_{\B}(t)=
\sum_{R\in\Ob\B}\frac{t^{|R|}}{|\Aut R|}=\sum_{n\ge 0} \frac{C_n}{2^n}t^n
=\frac{1-\sqrt{1-2t}}{t} .
\]
Since $\B$ is evenly up-covered with $u_n=n+1$, by Theorem \ref{T:gfsum}
the morphism generating function is
\[
M_{\B}(t)=
\sum_{n\ge 0} \frac{C_n (n+1)}{2^n} t^{2n+1} = \frac{t}{\sqrt{1-2t^2}} .
\]
\end{demo}
\par
Since $\B$ is evenly up-covered, equation (\ref{E:covereuc})
implies $|\tilde\B_n^{\up}|=n!$, and by Theorem
\ref{T:unique} $\tilde\B^{\up}$ must be isomorphic to $\tilde\T^{\up}$.
In fact, there is a natural way to associate a permutation
of $[n]$ to any $c\in\tilde\B_n^{\up}$.  
Given $(c_0,c_1,\dots,c_n)\in\tilde\B_n^{\up}$, 
there is a corresponding planar binary root tree with labelled 
non-terminal vertices:  a node gets label $i$ if $c_{i-1}\to c_i$
involves adding a bud at that node.  Put another set of labels 
$0,1,\dots, n$ on the leaves, running left to right.
For example, the two elements of $\tilde\B_2^{\up}$ are
\vskip .2in
\[
\cnodeput(0,.75){A}{1}
\cnodeput(-.5,0){B}{2}
\rput(1,-.75){\rnode{C}{\psframebox{2}}}
\rput(-1,-.75){\rnode{D}{\psframebox{0}}}
\rput(0,-.75){\rnode{E}{\psframebox{1}}}
\psset{nodesep=1pt}
\ncline{-}{A}{B}
\ncline{-}{A}{C}
\ncline{-}{B}{D}
\ncline{-}{B}{E}
\hskip 1in
\text{and}
\hskip 1in
\cnodeput(0,.75){A}{1}
\cnodeput(.5,0){B}{2}
\rput(-1,-.75){\rnode{C}{\psframebox{0}}}
\rput(0,-.75){\rnode{D}{\psframebox{1}}}
\rput(1,-.75){\rnode{E}{\psframebox{2}}}
\psset{nodesep=1pt}
\ncline{-}{A}{B}
\ncline{-}{A}{C}
\ncline{-}{B}{D}
\ncline{-}{B}{E}
\hskip 1in .
\]
\vskip .4in
\par\noindent
Now define a permutation of $[n]$ by sending $i\in[n]$ to the label on the
last common ancestor of the leaves labelled $i-1$ and $i$.  For example,
our two labelled trees above correspond respectively to the permutation
exchanging 1 and 2, and to the identity permutation.
\par
An element $U\in\tilde\B_n^\dn$ with $\pi(U)=V\in\B_n^\dn$ can be thought of as
$V$ equipped with an appropriate set of labels on its edges so that
exactly one of each pair of edges coming out of a non-terminal vertex
carries a label.  More precisely, let $\hat V$ be the set of 
non-terminal vertices of $V$:  then $U\in\pi^{-1}(V)$ can be identified
with a pair of functions $(g,h)$, where $g:[n]\to\hat V$ is a bijection
such that $g(v)>g(w)$ in $U$ when $v>w$, and $h:\hat V\to\{L,R\}$
(so that there are $2^n$ possiblities for $h$).  In this way one sees,
e.g., that for the binary trees $P,Q$ above one has $|\pi^{-1}(P)|=1\cdot 2^2
=4$ and $|\pi^{-1}(Q)|=2\cdot 2^3=16$.
\par
A summary of our examples (U=univalent, UC=evenly up-covered,
F=factorial):
\vskip .1in
\par\noindent
\begin{tabular}{|l|l|l|l|l|l|l|}\hline
\# & Description & Object g.f. & Morphism g.f. & U & UC & F\\ 
\hline
1 & Subsets of $[n]$ & $(1+t)^n$ & $nt(1+t^2)^{n-1}$ & yes & yes & yes\\ \hline
2 & Monomials & $e^{nt}$ & $nte^{nt^2}$ & no & yes & yes\\ \hline
3 & Finite graphs & $\sum_{n\ge 0}2^{\binom{n}2}\frac{t^n}{n!}$ &
$\sum_{n\ge 1}2^{\binom{n}2}\frac{t^{2n-1}}{(n-1)!}$ & no & yes & yes\\ \hline
4 & Necklaces & $1-\log(1-ct)$ & $\frac{ct}{1-ct^2}$ & no & yes & yes\\ \hline
5 & Partitions & $\prod_{n=1}^\infty\frac1{1-t^n}$  & 
$\frac{t}{1-t^2}\prod_{n=1}^\infty\frac1{1-t^{2n}}$ & yes & no & no\\ \hline
6 & Partitions & $\exp(\frac{t}{1-t})$ & $\frac{t}{1-t^2}\exp(\frac{t^2}
{1-t^2})$ & no & no & no\\ \hline
7 & Compositions & $\frac{1-t}{1-2t}$ & $\frac{t(1-t^2)}{(1-2t^2)^2}$ &
yes & no & no\\ \hline
8 & Planar rtd. trees & $\frac{1-\sqrt{1-4t}}{2t}$ & 
$\frac{1-\sqrt{1-4t^2}}{2t\sqrt{1-4t^2}}$ & yes & yes & no\\ \hline
9 & Rooted trees & $\sum_{n\ge 0}\frac{(n+1)^n}{(n+1)!}t^n$ &
$\sum_{n\ge 0}\frac{(n+1)^n}{n!}t^{2n+1}$ & no & yes & no\\ \hline
10 & Binary rtd. trees & $\frac{1-\sqrt{1-2t}}{t}$ &
$\frac{t}{\sqrt{1-2t^2}}$ & no & yes & no\\ \hline
\end{tabular}

\end{document}